\newtheorem{theorem}{Theorem}[section]
\newtheorem{cor}[theorem]{Corollary}
\newtheorem{defn}[theorem]{Definition}
\newtheorem{lemma}[theorem]{Lemma}
\newtheorem{prop}[theorem]{Proposition}
\newtheorem{remark}[theorem]{Remark}
\newtheorem*{ack}{Acknowledgements}
\newcommand{\mh}{\mbox{MHM}}
\newcommand{\ms}{\mbox{MHS}}
\newcommand{\mc}{\mbox{MHC}_y}
\newcommand{\co}{\mbox{Coh}}
\newcommand{\cs}{\mbox{Constr}}
\newcommand{\vct}{\mbox{Vect}}
\newcommand{\Xs}{{X^{(n)}}}
\newcommand{\DC}{{\mathcal D}}
\newcommand{\LL}{{\mathcal L}}
\newcommand{\E}{{\mathcal E}}
\newcommand{\OO}{{\mathcal O}}
\newcommand{\HC}{{\mathcal H}}
\newcommand{\FC}{{\mathcal F}}
\newcommand{\GC}{{\mathcal G}}
\newcommand{\MC}{{\mathcal M}}
\newcommand{\PC}{{\mathcal P}}
\newcommand{\IC}{{\mathcal I}}
\newcommand{\Z}{\mathbb{Z}}
\newcommand{\Q}{\mathbb{Q}}
\newcommand{\C}{\mathbb{C}}
\begin{document}

\title[Classes of symmetric products]{Characteristic classes of symmetric products\\ of complex quasi-projective varieties}

\dedicatory{\it To the memory of Friedrich Hirzebruch}

\author[S. E. Cappell ]{Sylvain E. Cappell}
\address{S. E. Cappell: Courant Institute, New York University, 251 Mercer Street, New York, NY 10012, USA}
\email {cappell@cims.nyu.edu}

\author[L. Maxim ]{Laurentiu Maxim}
\address{L. Maxim : Department of Mathematics, University of Wisconsin-Madison,
480 Lincoln Drive,
Madison, WI 53706-1388, USA.}
\email {maxim@math.wisc.edu}

\author[J. Sch\"urmann ]{J\"org Sch\"urmann}
\address{J.  Sch\"urmann : Mathematische Institut,
          Universit\"at M\"unster,
          Einsteinstr. 62, 48149 M\"unster,
          Germany.}
\email {jschuerm@math.uni-muenster.de}

\author[J. L. Shaneson ]{Julius L. Shaneson}
\address{J. L. Shaneson: Department of Mathematics, University of Pennsylvania, 209 S 33rd St., Philadelphia, PA 19104, USA}
\email {shaneson@sas.upenn.edu}

\author[S. Yokura ]{Shoji Yokura}
\address{S. Yokura : Department of Mathematics and Computer Science, Faculty of Science, Kagoshima University, 21-35 Korimoto 1-chome, Kagoshima 890-0065, Japan.}
\email {yokura@sci.kagoshima-u.ac.jp}


\date{\today}

\keywords{symmetric product, Pontrjagin ring, generating series,  characteristic class, mixed Hodge module, constructible sheaf, coherent sheaf, motivic Grothendieck group, Adams operation}

\begin{abstract}  We prove generating series formulae for suitable twisted characteristic classes of symmetric products of a {\it singular} complex quasi-projective variety. More concretely, we study  homology Hirzebruch classes for motivic coefficients, as well as for complexes of mixed Hodge modules. As a special case, we obtain a generating series formula for the (intersection) homology Hirzebruch classes of symmetric products. In some cases, the latter yields a similar formula for twisted homology $L$-classes generalizing results of Hirzebruch-Zagier and Moonen. Our methods also apply to the study of Todd classes of (complexes of) coherent sheaves, as well as Chern classes of (complexes of) constructible sheaves, generalizing to arbitrary coefficients results of Moonen and resp. Ohmoto.

\end{abstract}

\maketitle

\tableofcontents


\section{Introduction}

Some of the most interesting examples of orbifolds are the symmetric products of algebraic varieties. The {\it $n$-th symmetric product} of a space $X$ is defined by
$${X^{(n)}}:=\overbrace{X \times \cdots \times X}^{n \ times}/{\Sigma_n},$$
i.e., the quotient of the product of $n$ copies of $X$ by the natural action of the symmetric group  on $n$ elements, $\Sigma_n$.  

The standard approach for computing invariants ${\IC}(X^{(n)})$ of symmetric products is to encode the respective invariants of {\it all} symmetric products in a {\it generating series}, i.e., an expression of the form $$S_{\IC}(X):=\sum_{n \geq 0} \mathcal{I}( X^{(n)} ) \cdot t^n,$$ 
provided  $ \mathcal{I}( X^{(n)} ) $ can be defined for all $n$.
This is analogous  to the zeta function of a variety over a finite field. The aim is to calculate such an expression solely in terms of invariants of $X$, so  $\mathcal{I}( X^{(n)} )$ is just the coefficient of $t^n$ in the resulting expression.

There is a well-known formula due to Macdonald \cite{Mac} for the generating series of the topological Euler characteristic. A  class version of this formula was recently obtained by Ohmoto in \cite{Oh} for the Chern classes of MacPherson \cite{MP}. Moonen \cite{M} obtained generating series for the arithmetic genus of symmetric products of a  projective manifold and, more generally, for the Todd classes of symmetric products of any projective variety, as well as for his generalized Todd classes $\tau_y$ (which he could only define for a projective orbifold $Y/G$, with $G$ a finite group of algebraic automorphisms acting on a projective manifold $Y$). In \cite{Za}, Hirzebruch and Zagier obtained such generating series for the signature and $L$-classes of symmetric products of rational homology manifolds. More precise $L$-class formulae for symmetric products of projective manifolds were obtained by Moonen  \cite{M} as a specialization of his generalized Todd class formula at $y=1$. 
Also, Borisov--Libgober \cite{BL3} computed generating series for the Hirzebruch $\chi_y$-genus and, more generally, for elliptic genus of symmetric products of smooth compact varieties. Generating series for the mixed Hodge numbers of complexes of mixed Hodge modules on symmetric products of (possibly singular) quasi-projective varieties have been recently obtained in \cite{MS09} by relating symmetric group actions on external products to the theory of lambda rings (e.g., see \cite{Yau}), but see also \cite{MSS} for an alternative approach.

\bigskip

In this paper we assume that $X$ is a (possibly singular) complex quasi-projective variety, so its symmetric products  $X^{(n)}$ are quasi-projective varieties as well. 

The invariants of symmetric products considered in this paper are homology characteristic classes such as the un-normalized Hirzebruch classes ${T_y}_*$ of Brasselet--Sch\"urmann--Yokura \cite{BSY}, (rationalized) Chern classes $c_*$ of MacPherson \cite{MP}, and the Todd classes $td_*$ of Baum-Fulton-MacPherson \cite{BFM} and Moonen \cite{M}. In fact, we derive our formulae for characteristic classes with coefficients such as (complexes of) mixed Hodge modules, constructible or coherent sheaves. In particular, for $X$ pure-dimensional, we obtain formulae for twisted intersection Hirzebruch classes ${IT_y}_*$ 
as studied by Cappell--Maxim--Shaneson \cite{CMS}. Conjecturally, for a projective variety $X$, the specialization of these  twisted intersection Hirzebruch classes at $y=1$ should equal (up to re-normalization) the Cappell-Shaneson homology $L$-class (cf.  \cite{CS}) of the underlying self-dual twisted intersection cohomology classes. This identification is known in some cases when $X$ is smooth and projective.

\bigskip

\noindent\emph{Convention.} To simplify the notation, we will often write $cl_*(\PC):=cl_*([\PC])$ for the value of a characteristic class transformation $cl_*(-)$  on the class of $\PC$ in a suitable  Grothendieck group.


\subsection{Motivic Hirzebruch classes}\label{motHir} We first explain our results for the  un-normalized {\it motivic} Hirzebruch class transformation (see \cite {BSY, SY, Yokura-MSRI}): 
$${T_y}_*:K_0(var/X) \to H_{ev}^{BM}(X) \otimes \Q[y],$$ where $H_{ev}^{BM}(-)$ denotes the Borel--Moore homology in even degrees, and  $K_0(var/X)$ is the relative Grothendieck group of complex algebraic varieties over $X$ (as introduced by Looijenga \cite{Lo}), which is freely generated by isomorphism classes of morphisms $[Y \to X]$ modulo the ``scissor" relation
\begin{equation}\label{sc} [Y\to X]=[Z \hookrightarrow Y \to X] + [Y \setminus Z \hookrightarrow Y \to X], \end{equation}
for $Z$  a Zariski closed subset in $Y$. 
If we let  $Z = Y_{red}$ we deduce that these classes $[Y \to X]$ depend only on the underlying reduced spaces. By resolution of singularities, $K_0(var/X)$ is generated by classes $[Y \to X]$ with $Y$ smooth, pure
dimensional, and proper over $X$. If $X$ is a point space, we get the motivic Grothendieck group $K_0(var/\C)$, and the motivic Hirzebruch class  transformation $T_{y*}$  reduces in this case  to a ring homomorphism $\chi_y:K_0(var/\C) \to \Z[y].$

The un-normalized {\it motivic Hirzebruch class} and resp. the {\it $\chi_y$-genus} of a complex algebraic variety $X$ are defined by:
$$T_{y*}(X):=T_{y*}([id_X]) \quad \text{and} \quad \chi_{y}(X):=T_{y*}([X\to pt]).$$
We  point out that the homology class ${T_y}_*(X)$ is an extension to the singular setting of the un-normalized cohomology Hirzebruch class $T_y^*(-)$
appearing in the generalized Hirzebruch--Riemann--Roch theorem \cite{H}, which in Hirzebruch's philosophy corresponds to the un-normalized  power series $Q_y(\alpha)=\frac{\alpha (1+ye^{-\alpha})}{1-e^{-\alpha}} \in \Q[y][[\alpha]]$.  More precisely, for $X$ smooth, we have
$$T_{y*}(X)=T_y^*(TX) \cap [X]:=\Big( \prod_{i=1}^{\dim(X)} Q_y(\alpha_i) \Big) \cap [X],$$ with $\alpha_i$ the Chern roots of the tangent bundle $TX$.
The associated normalized power series (needed in \S \ref{r1}) is $\widehat{Q}_y(\alpha):=\frac{Q_y \left( \alpha (1+y) \right)}{1+y}=\frac{\alpha (1+y)}{1-e^{-\alpha (1+y)}}-\alpha y$, which defines the  normalized cohomology Hirzebruch class ${\widehat{T}_y}^*(-)$. If we specialize the parameter $y$ of ${\widehat{T}_y}^*(-)$ to the three distinguished values $y=-1, 0$ and $1$, we recover the cohomology Chern, Todd, and L-class, respectively. 

\bigskip

For any morphism $f:X' \to X$ we have a functorial push-forward 
$$f_!:K_0(var/X') \to K_0(var/X) \ , \ [Z \overset{h}{\to} X'] \mapsto [Z \overset{f \circ h}{\to} X].$$
Moreover, an external product
$$ \boxtimes : K_0(var/X) \times K_0(var/X') \to K_0(var/X\times X')$$ is defined by the formula:
$$[Z \to X] \boxtimes [Z' \to X'] =[Z \times Z' \to X \times X'].$$
These two structures on the relative motivic Grothendieck group allow us to introduce a corresponding motivic Pontrjiagin ring similar to a well-known construction in homology. More generally, 
let $F$ be a functor to the category of abelian groups, defined on complex quasi-projective varieties, covariantly functorial for all (proper) morphisms. Assume $F$ is also endowed with a commutative, associative and bilinear cross-product $\boxtimes$ commuting with (proper) push-forwards $(-)_*$, with a unit $1\in F(pt)$ for $pt$ denoting the point space. The only examples for $F(X)$ needed in this paper  are:  the relative motivic Grothendieck group $K_0(var/X)$ and  the Borel-Moore homology  $H_*(X):=H^{BM}_{even}(X) \otimes R$ with coefficients $R=\Q$ or $\Q[y]$.

\begin{defn} For a fixed complex quasi-projective variety $X$ 
we define the commutative Pontrjagin ring $\left(PF(X), \odot \right)$ by
$$PF(X):=\sum_{n=0}^{\infty}  F(\Xs) \cdot t^n 
:=\prod_{n=0}^{\infty}  F(\Xs)
,$$
with product $\odot$ induced via
$$\odot:F(\Xs) \times F(X^{(m)}) \overset{\boxtimes}{\to} F(\Xs \times X^{(m)}) \overset{(-)_*}{\to} F(X^{(n+m)}),$$
and unit $1 \in F(X^{(0)})=F(pt)$.
\end{defn}

It is easy to see that, if $f:Y \to X$ is a (proper) morphism, then we get an induced ring homomorphism $$f_*:=(f^{(n)}_*)_n : PF(Y) \to PF(X),$$ with $f^{(n)}:Y^{(n)} \to \Xs$ the corresponding (proper) morphism on the $n$-th symmetric products. In our examples, properness of morphisms is needed for the case $F=H_*$ of Borel-Moore homology.

We can now state our first result:
\begin{theorem}\label{t0}
Let  $f:Y \to X$ be a morphism of complex quasi-projective varieties. Then  
the following identity holds in the Pontrjagin ring $PH_*(X)$:
\begin{equation}\label{te0}
\sum_{n \geq 0} {{T_{(-y)}}_*} ([f^{(n)}]) \cdot t^n= \exp \left( \sum_{r \geq 1} \Psi_r  \Big(d^r_* {{T_{(-y)}}_*}([f]) \Big) \cdot \frac{t^r}{r} \right),\end{equation}
where
\begin{enumerate}
\item[(a)] $d^r:X \to X^{(r)}$ 
is the composition of  the diagonal embedding $i_r:X\simeq \Delta_r(X) \hookrightarrow X^r$ with the projection $\pi_r: X^r \to X^{(r)}$. 
\item[(b)] $\Psi_r$ is the \emph{$r$-th homological Adams operation}, which on $H^{BM}_{2k}(X^{(r)};\Q)$ ($k \in \Z$) is defined by multiplication by $\frac{1}{r^k}$, together with $y \mapsto y^r$.
\end{enumerate}
\end{theorem}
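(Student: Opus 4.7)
The identity \eqref{te0} has the form of a classical Newton-type formula in a $\lambda$-ring,
$$\sum_{n \geq 0} \sigma_n(a)\, t^n \;=\; \exp\!\left( \sum_{r\geq 1} \psi_r(a)\,\frac{t^r}{r} \right),$$
relating elementary symmetric operations to power sums via Adams operations. The plan is to realize both sides of \eqref{te0} as the image, under the motivic Hirzebruch class transformation $T_{(-y)*}$, of an analogous identity in the motivic Pontrjagin ring $PK_0(var/X)$, where a canonical $\lambda$-type structure is supplied by $\Sigma_n$-equivariant external products.

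First I would prove a precursor identity expressing $[f^{(n)}] \in K_0(var/X^{(n)})$ in terms of the external products $[f]^{\boxtimes r}$ and iterated diagonal pushforwards. The input is the general fact that $Y^{(n)} \to X^{(n)}$ arises as the $\Sigma_n$-quotient of $Y^n \to X^n$, so that $[f^{(n)}]$ is encoded by the $\Sigma_n$-invariant part of $[f]^{\boxtimes n}$ after pushforward along $\pi_n: X^n \to X^{(n)}$. Decomposing $\Sigma_n$-representations by cycle-type partitions and invoking Newton's identity in the character ring of the symmetric group rewrites this invariant projection as an exponential of power sums; each power sum $p_r$ translates into the iterated diagonal $d^r$, because the cyclic $\Sigma_r$-orbit of an $r$-fold external product, pushed to $X^{(r)}$, factors through $d^r: X \to X^{(r)}$.

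Next I would apply $T_{(-y)*}$ and use its compatibility with Künneth (external) products and proper pushforwards, so that the motivic precursor descends to the desired identity in $PH_*(X)$ with the right-hand side involving $d^r_* T_{(-y)*}([f])$ twisted by an operator which I expect to be exactly $\Psi_r$. The explicit form of $\Psi_r$---multiplication by $1/r^k$ on $H^{BM}_{2k}$ together with $y \mapsto y^r$---arises from the interaction of $T_{(-y)*}$ with the $r$-th Adams operation: the $1/r^k$ scaling is the homological counterpart of the $r^k$-scaling of the Chern character under $\psi^r$ in cohomological degree $2k$, while $y \mapsto y^r$ is the standard Adams operation on the Hodge-polynomial variable.

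The main obstacle will be the careful bookkeeping in this last step, namely verifying that the combinatorial operation produced by the $\Sigma_n$-equivariant construction matches exactly the $\Psi_r$ of the statement. This amounts to tracking how $T_{(-y)*}$ redistributes among Borel-Moore degrees under iterated diagonal pushforwards and external powers, and is the point at which the specific coefficient $1/r^k$---as opposed to any other plausible normalization---must be pinned down by direct computation compatible with the multiplicative formalism of the Hirzebruch class.
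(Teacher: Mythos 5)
Your approach has a genuine gap, and it is worth naming precisely because your intuition about \emph{why} the exponential formula should hold is essentially correct --- you have just placed it in the wrong category.

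The central problem is your proposed ``precursor identity'' in $PK_0(var/X)$. You want to realize $[f^{(n)}]\in K_0(var/X^{(n)})$ as a $\Sigma_n$-invariant projection of $\pi_{n*}[f]^{\boxtimes n}$, decompose by cycle type, and apply Newton's identity --- but the relative motivic Grothendieck group has no mechanism for any of these operations. There is no way to ``take the $\Sigma_n$-invariant part'' of a class in $K_0(var/X^{(n)})$, nor is there a homological or weight grading on which a $1/r^k$-scaling Adams operation could act. Indeed the Kapranov zeta function $\zeta([f]) = \sum_n [f^{(n)}]\, t^n$ is a \emph{group} homomorphism $(K_0(var/X),+)\to(PK_0(var/X)^{\times},\odot)$ (coming from the geometric decomposition $(Z\cup U)^{(n)}\simeq\bigcup_{i+j=n}Z^{(i)}\times U^{(j)}$), but it does \emph{not} admit an exponential-of-power-sums presentation at the motivic level: that structure is destroyed by the fine motivic information, and only re-emerges after applying a realization like $T_{(-y)*}$. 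Your final remark that the $1/r^k$ ``arises from the interaction of $T_{(-y)*}$ with the $r$-th Adams operation'' is circular under your scheme, since $K_0(var/X)$ carries no Adams operation for $T_{(-y)*}$ to interact with; the $1/r^k$ can only come from the Lefschetz--Riemann--Roch normal-bundle contribution on the fixed locus, which is invisible motivically.

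The equivariant/cyclic-orbit picture you sketch --- power sums corresponding to $r$-cycles, with the fixed locus of $\sigma_r$ in $Y^r$ mapping diagonally into $X^{(r)}$ --- is exactly the Lefschetz-fixed-point mechanism used in this paper's proof of the \emph{stronger} Theorem~\ref{t1}. But to make it rigorous one must work in a framework that supports $G$-equivariant objects, invariant projectors, and a localized Riemann--Roch transformation on fixed-point sets: equivariant mixed Hodge modules (or coherent/constructible sheaves), fed into the Atiyah--Singer class and the Lefschetz--Riemann--Roch theorem. The three key properties (averaging, multiplicativity over cycle type, localization to the fixed locus with the $\Psi_r$ appearing) are Lemmas~\ref{l1}, \ref{l2}, \ref{l3}, and they simply do not live in $K_0(var/-)$.

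The paper's actual, mixed-Hodge-module-free proof of Theorem~\ref{t0} is quite different and shorter. One rewrites \eqref{te0} as commutativity of a square \eqref{500} with $\zeta$ on the left and $\exp(\sum_r \Psi_r d^r_*(-)\tfrac{t^r}{r})$ on the right; since all four arrows commute with proper push-forwards, it suffices to check on generators $[f\colon M\to X]$ with $M$ smooth proper, hence (by functoriality and compatibility of $T_{(-y)*}$ with open pull-backs) on $f=\mathrm{id}_M$ with $M$ smooth \emph{projective}. That case is precisely Moonen's generating series formula for the orbifold Todd class $\tau_y(M^{(n)})$, combined with the identification $\tau_y(M^{(n)})=T_{y*}(M^{(n)})$ via the filtered Du Bois complex. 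In short: the paper offloads the hard equivariant computation onto Moonen's classical theorem, while your proposal would have to re-derive it from scratch --- and the category in which you propose to do so cannot support the argument.
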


As special cases, we obtain for $f=id_X$ and, respectively, for the projection $f:X \times X' \to X$ the following:
\begin{cor}\label{cormain}
Let $X$ and $X'$ be  complex quasi-projective varieties. Then the following identities hold in the Pontrjagin ring $PH_*(X)$:
\begin{equation}\label{te100}
\sum_{n \geq 0} {{T_{(-y)}}_*} (X^{(n)}) \cdot t^n= \exp \left( \sum_{r \geq 1} \Psi_r  \left(d^r_* {{T_{(-y)}}_*}(X) \right) \cdot \frac{t^r}{r} \right),\end{equation}
and 
\begin{equation}\label{te101}
\sum_{n \geq 0} {{T_{(-y)}}_*} \Big([(X \times X')^{(n)} \to \Xs] \Big) \cdot t^n= \exp \left( \sum_{r \geq 1} \Psi_r  d^r_* \Big({\chi_{-y}(X')} \cdot  {{T_{(-y)}}_*}(X) \Big) \cdot \frac{t^r}{r} \right).\end{equation}
\end{cor}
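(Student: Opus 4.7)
The plan is to deduce both identities as direct specializations of Theorem~\ref{t0}, with the only substantive ingredient being the compatibility of the motivic Hirzebruch class transformation with external products.

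For identity (\ref{te100}) I would simply take $f=id_X$ in Theorem~\ref{t0}. The map induced on $n$-th symmetric products is $f^{(n)}=id_{\Xs}$, so by the convention $T_{(-y)*}([id_Z])=T_{(-y)*}(Z)$, both sides of (\ref{te0}) specialize to the corresponding sides of (\ref{te100}); no further computation is required.

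For identity (\ref{te101}) I would apply Theorem~\ref{t0} to the first projection $f:X\times X'\to X$. The induced map on $n$-th symmetric products is $f^{(n)}:(X\times X')^{(n)}\to \Xs$, since the diagonal $\Sigma_n$-action on $(X\times X')^n$ commutes with the projection to $X^n$. All that is left is to identify $T_{(-y)*}([f])$. I would write
\[
[X\times X'\to X] \;=\; [id_X]\boxtimes[X'\to pt]
\]
in $K_0(var/X)\simeq K_0(var/X\times pt)$, and then invoke the multiplicativity of $T_{y*}$ under external products (a standard property of the Brasselet--Sch\"urmann--Yokura transformation, cf.\ \cite{BSY, SY}) together with $T_{(-y)*}([X'\to pt])=\chi_{-y}(X')$ to obtain
\[
T_{(-y)*}([f]) \;=\; T_{(-y)*}(X)\boxtimes\chi_{-y}(X') \;=\; \chi_{-y}(X')\cdot T_{(-y)*}(X),
\]
where in the last step I use the canonical isomorphism $H_*(X)\otimes H_*(pt)\simeq H_*(X)$ to absorb the scalar factor. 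Substituting this into (\ref{te0}) yields (\ref{te101}).

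Both parts are therefore bookkeeping once Theorem~\ref{t0} is in hand. The only non-routine step, and hence the expected (but very mild) obstacle, is the external-product multiplicativity of $T_{y*}$ needed in part (\ref{te101}); this is precisely the kind of structural property that should be recorded alongside (or immediately after) the statement of Theorem~\ref{t0}, and its use here is transparent because one of the factors is the point.
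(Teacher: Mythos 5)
Your proof is correct and follows exactly the paper's own argument: for (\ref{te100}) specialize Theorem~\ref{t0} to $f=id_X$, and for (\ref{te101}) apply it to the projection $f:X\times X'\to X$, then identify $T_{(-y)*}([f])=\chi_{-y}(X')\cdot T_{(-y)*}(X)$ via the decomposition $[f]=[id_X]\boxtimes[X'\to pt]$ and the cross-product compatibility of $T_{y*}$ from \cite{BSY}. Nothing further is needed.
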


\begin{remark}\rm
Formula (\ref{te101}) is one of the key ingredients for obtaining generating series formulae for the push-forwards (under the Hilbert-Chow morphisms) of the motivic Hirzebruch classes of {\it Hilbert schemes of points} of a smooth quasi-projective variety, see our recent preprint \cite{CMOSY} for details.
\end{remark}

To see that formula (\ref{te101})  is indeed a consequence of Theorem \ref{t0}, we note that  $$[f:X \times X' \to X]=[id_X] \boxtimes [X' \to pt],$$ so that 
$${{T_{(-y)}}_*}([f])={\chi_{-y}(X')} \cdot  {{T_{(-y)}}_*}(X).$$ This is a special case of the fact that the motivic Hirzebruch class  transformation $T_{y*}$ commutes with cross-products, see \cite{BSY}.  

Theorem \ref{t0} is a special case of  a more general result about a corresponding generating series for  un-normalized Hirzebruch classes of (complexes of) mixed Hodge modules, as we shall explain next. 
A mixed-Hodge-module-free proof of Theorem \ref{t0} will be given in Section \ref{s0}, by reducing it to the special case $f=id_M$ for $M$ a smooth quasi-projective variety.   The latter case is a corollary of 
Moonen's generating series formula for his generalized Todd classes $\tau_y(M^{(n)})$ of symmetric products of a projective manifold $M$ (see \cite{M}[Satz $2.1_y$, p.172]), together with the identification of $$\tau_y(M^{(n)})={T_y}_*(M^{(n)})$$ with the corresponding un-normalized Hirzebruch class.


\subsection{Hirzebruch classes of mixed Hodge modules}

As shown in \cite{BSY,Sch-MSRI}, the un-normalized motivic Hirzebruch class transformation factors through the Grothendieck group $K_0(\mh(X))$ of (algebraic) mixed Hodge modules on $X$ \cite{Sa}, by a transformation 
\begin{equation}\label{m} {T_y}_*:K_0(\mh(X)) \to H_{ev}^{BM}(X) \otimes \Q[y^{\pm 1}],\end{equation}
together with the natural group homomorphism
\begin{equation}
\chi_{\rm Hdg}: K_0(var/X) \to K_0(\mh(X)) \ , \ [f:Y \to X] \mapsto [f_! \Q^H_Y] \ ,
\end{equation}
for $\Q^H_Y$ the constant mixed Hodge module complex on $Y$.
So the un-normalized homology Hirzebruch class ${T_y}_*(X)$ can also be defined as ${T_y}_*([\Q^H_X])$.

One advantage of using mixed Hodge modules is that we can evaluate the transformation ${T_y}_*$ on other interesting ``coefficients''. For example,  a ``good"  (i.e., graded polarizable, admissible and with quasi-unipotent monodromy at infinity) variation $\LL$ of mixed Hodge structures on a smooth variety  $X$ yields twisted Hirzebruch classes ${T_y}_*(X,\LL)$,  the shifted intersection cohomology Hodge module $IC'^H_X:=IC^H_X[-\dim (X)]$ yields, for $X$ pure-dimensional, similar intersection Hirzebruch classes ${IT_y}_*(X):=T_{y*}([IC'^H_X])$, and  for a generically defined ``good"  variation $\LL$ of mixed Hodge structures on $X$ we obtain twisted intersection Hirzebruch classes ${IT_y}_*(X,\LL)$ associated to the shifted twisted intersection Hodge module $IC'^H_X(\LL):=IC^H_X(\LL)[-\dim (X)]$.

Over a point space $X=\{pt\}$, the transformation ${T_y}_*$ (as well as  its normalization defined in \S\ref{r1} below) reduces to the $\chi_y$-polynomial ring homomorphism 
\begin{equation}\chi_y: K_0(\ms^p) \to \Z[y,y^{-1}],\end{equation} which is defined on the Grothendieck group $K_0(\ms^p)$  of (graded) polarizable mixed Hodge structures by: \begin{equation}\label{def-chi}\chi_y([H]):=\sum_p {\rm dim}_{\C} Gr^p_F(H \otimes \C) \cdot (-y)^p,\end{equation} with $F^{\bullet}$ the Hodge filtration on $H \in \ms^p$. 
So,  if $X$ is a compact variety, by pushing down to a point the classes ${T_y}_*(X)$ and ${IT_y}_*(X)$ (or their normalized counterparts from \S \ref{r1}), one gets that the degrees of their zero-dimensional 
components are the corresponding Hodge polynomials $\chi_y(X)$ and $I\chi_y(X)$, respectively, defined in terms of dimensions of the graded parts of the Hodge filtration on the (intersection) cohomology of $X$. Similar considerations apply of course to the twisted versions ${T_y}_*(X,\LL)$ and resp. ${IT_y}_*(X,\LL)$ of these classes, and we denote their corresponding degrees by $\chi_y(X,\LL)$ and resp. $I\chi_y(X,\LL)$.

\bigskip

Before stating our main results on generating series of Hirzebruch classes in the mixed Hodge module context, we need to recall the definition of symmetric powers of (complexes of) mixed Hodge modules (see \cite{MSS} for more details).

Let $X$ be a complex quasi-projective variety, with $n$-th symmetric product $X^{(n)}$ and projection $\pi_n:X^n \to X^{(n)}$. For a complex of mixed Hodge modules $\MC \in D^b\mh(X)$, we let the {\it $n$-th symmetric power} of $\MC$ be defined by:
\begin{equation}\label{e}{\MC^{(n)}}:=({\pi_n}_*\MC^{\boxtimes{n}})^{\Sigma_n} 
\in D^b\mh(X^{(n)}),\end{equation} where  $\MC^{\boxtimes n} \in D^b\mh(X^n)$ is the $n$-th 
external product of $\MC$ with the $\Sigma_n$-action defined as in \cite{MSS}, and  $(-)^{\Sigma_n}$ is the projector on the $\Sigma_n$-invariant sub-object (which is well-defined since $D^b\mh(\Xs)$ is a Karoubian additive category).  The action of $\Sigma_n$ on $\MC^{\boxtimes{n}}$ is, by construction,  compatible with the natural action on the underlying $\Q$-complexes (see \cite{MSS} for details). As special cases of (\ref{e}), it was shown in \cite{MSS}[Rem.2.4(i)] that for $\MC=\Q_X^H$ the constant Hodge sheaf on $X$, one obtains:
\begin{equation}\label{q}\left(\Q_X^H\right)^{(n)}=\Q^H_{X^{(n)}}.\end{equation}
Moreover, it follows from the equivariant K\"unneth formula of \cite{MSS}[Sect.1.12] that for $\MC:=f_!\Q^H_Y$ with $f:Y \to X$ an algebraic map, we get \begin{equation}\label{qq}\left(f_!\Q_Y^H\right)^{(n)}=f^{(n)}_!(\Q^H_{Y^{(n)}}).\end{equation}
Also, for $\LL$ a polarizable variation of Hodge structure on a smooth
open subvariety $U$ of an irreducible variety $X$, we get 
\begin{equation}\label{tic}\left({IC'}_X^H(\LL)\right)^{(n)}={IC'}^H_{X^{(n)}}(\LL^{(n)}), \end{equation}
where the ``good" variation $\LL^{(n)}$ is generically defined on the regular part of $\Xs$ (see \cite{MSS}[Rem.2.4(ii)]). In particular, for the constant variation we have that
\begin{equation}\label{ic}\left({IC'}_X^H\right)^{(n)}={IC'}^H_{X^{(n)}}.\end{equation}

The main result of this paper is the following generating series formula for the Hirzebruch 
classes of the symmetric powers $\MC^{(n)} \in D^b\mh(X^{(n)})$ of a fixed complex of mixed Hodge modules on the variety $X$:

\begin{theorem}\label{t1}
Let $X$ be a complex quasi-projective variety and $\MC \in D^b\mh(X)$. Then  
the following identity holds in the Pontrjagin ring $PH_*(X)$:
\begin{equation}\label{te1}
\sum_{n \geq 0} {{T_{(-y)}}_*} (\MC^{(n)} ) \cdot t^n= \exp \left( \sum_{r \geq 1} \Psi_r  \Big(d^r_* {{T_{(-y)}}_*}(\MC) \Big) \cdot \frac{t^r}{r} \right),\end{equation}
with $d_r:X \to X^{({r})}$ the canonical diagonal mapping, and $\Psi_r$ the $r$-th homological Adams operation.
\end{theorem}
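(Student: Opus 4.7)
My plan is to work entirely inside Pontrjagin rings and to lift the identity to the Hodge module Grothendieck group, where the generating series of symmetric powers already admits a plethystic exponential description; the Hirzebruch class transformation will then convert this identity into the claimed one in $PH_*(X)$.

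\textbf{Step 1: Pontrjagin ring of mixed Hodge modules.} Define $PK_0(\mh(X)) := \prod_{n\geq 0} K_0(\mh(\Xs))$, equipped with the Pontrjagin product $\odot$ induced from $\boxtimes$ followed by the proper pushforward $(\pi_{n,m})_*\colon X^{(n)}\times X^{(m)}\to X^{(n+m)}$. This is a commutative ring because $\boxtimes$ is symmetric up to a canonical isomorphism on $D^b\mh$ and because $\pi_{n,m}$ factors through the symmetric product $X^{(n+m)}$. The Hirzebruch class transformation $T_{(-y)*}\colon K_0(\mh(-))\to H_*(-)$ from \eqref{m} commutes with proper pushforwards (by \cite{BSY}) and with external products (since $T_y^*$ is multiplicative and $T_{y*}$ satisfies a K\"unneth formula in the smooth case which extends to mixed Hodge modules by the usual devissage), so it induces a ring homomorphism
\[
PT_{(-y)*}\colon PK_0(\mh(X)) \longrightarrow PH_*(X).
\]

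\textbf{Step 2: Plethystic exponential at the Hodge module level.} The key algebraic input, provided by the equivariant K\"unneth formula and $\Sigma_n$-character calculus of \cite{MSS}, is the identity
\[
\sum_{n\geq 0}[\MC^{(n)}]\cdot t^n \;=\; \exp\!\left(\sum_{r\geq 1} d^r_*\Psi^H_r[\MC]\cdot\frac{t^r}{r}\right)
\]
in $PK_0(\mh(X))$, where $\Psi^H_r$ is the Adams operation on $K_0(\mh(-))$ induced from the natural $\Sigma_r$-action on $\MC^{\boxtimes r}$ by computing the alternating trace of an $r$-cycle $(\MC^{\boxtimes r})^{\sigma_r}$, regarded as an element of $K_0(\mh(X))$ via the pullback $i_r^*=i_r^!$ along the diagonal (followed by the identification $X\simeq \Delta_r(X)$). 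This is the standard Frobenius/cycle-index decomposition of the $\Sigma_n$-invariant projector in a Karoubian tensor category: writing a permutation as a product of disjoint cycles and collecting cycles of length $r$ yields exactly the plethystic exponential above. Nothing more than the equivariant K\"unneth formula and the existence of the Adams operations on $K_0(\mh)$ is used here; both are recorded in \cite{MSS}.

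\textbf{Step 3: Transport via $T_{(-y)*}$.} Applying the ring homomorphism $PT_{(-y)*}$ to Step 2, the left-hand side becomes the series appearing in \eqref{te1}, and the right-hand side becomes
\[
\exp\!\left(\sum_{r\geq 1} d^r_*\, T_{(-y)*}\!\bigl(\Psi^H_r[\MC]\bigr)\cdot\frac{t^r}{r}\right),
\]
since $T_{(-y)*}$ commutes with the proper pushforward $d^r_*$. It remains to verify the Adams compatibility
\[
T_{(-y)*}\bigl(\Psi^H_r[\MC]\bigr) \;=\; \Psi_r\bigl(T_{(-y)*}[\MC]\bigr),
\]
where $\Psi_r$ on the right is the homological Adams operation of Theorem \ref{t0}(b). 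This identity is checked by first reducing (via resolution of singularities and the fact that $K_0(\mh(X))$ is generated by pushforwards of ``smooth'' Hodge modules) to the case of a constant variation $\LL$ on a smooth $X$, and then comparing the Chern root description of $T_{(-y)}^*(TX)\cap[X]$ with the effect of the $r$-cycle: the $r$-th symmetric Adams operation multiplies a degree $k$ Chern root contribution by $r^k$ in cohomology, which is exactly the dual of multiplication by $r^{-k}$ on $H^{BM}_{2k}$, and the Hodge filtration pulled back along $d^r$ correspondingly substitutes $y\mapsto y^r$.

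\textbf{Main obstacle.} The hard part is Step 3, the compatibility of the Hirzebruch class transformation with Adams operations. The definition of $\Psi^H_r$ is transparent on equivariant external powers, but its passage through the non-trivial transformation ${T_y}_*$ requires a careful bookkeeping of the twist $y\mapsto y^r$ coming from the action of $\sigma_r$ on the Hodge filtration of $\MC^{\boxtimes r}|_{\Delta_r(X)}$ and its interplay with the Chern character formalism underlying ${T_y}_*$. Once this is established (in the smooth, constant-coefficient case it reduces to Moonen's computation \cite{M}[Satz $2.1_y$], cited in the discussion following Theorem \ref{t0}), the theorem is immediate.
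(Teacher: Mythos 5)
There is a genuine gap, and it is exactly where your Step~2 claims a ``standard'' plethystic exponential identity in $PK_0(\mh(X))$. Such an identity does not hold, and the ``Adams operation'' $\Psi^H_r$ you propose is not well-defined on $K_0(\mh(X))$. The expression $(\MC^{\boxtimes r})^{\sigma_r}$ is not an object of $D^b\mh(X^r)$ (taking the invariants under a single $r$-cycle is not the same as taking a trace), and pulling back along the diagonal $i_r$ does not produce an honest class in $K_0(\mh(X))$ whose Pontrjagin exponential recovers $\sum_n[\MC^{(n)}]t^n$. Concretely: for $n\geq 2$ the class $[\MC^{(n)}]\in K_0(\mh(\Xs))$ comes from a complex supported everywhere on $\Xs$, while the coefficient of $t^n$ on your right-hand side is a $\Q$-linear combination (with denominators $\prod_r k_r!\,r^{k_r}$) of Pontrjagin products of diagonal pushforwards, and identifying these in $K_0$ would require a Lefschetz-type fixed-point decomposition at the $K$-theory level that does not exist. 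The averaging formula that is actually available — Lemma~\ref{l1}, coming from the Atiyah--Singer class transformation of \cite{CMSS1} — is a statement in Borel--Moore \emph{homology}, where localization on fixed-point sets is supplied by the Lefschetz--Riemann--Roch transformation $td_*(-;g)$. It cannot be lifted to $K_0(\mh(X))$; in general $[(V)^{G}]\neq\frac{1}{|G|}\sum_g(\cdots)$ in a Grothendieck group, and the obstruction is precisely why the paper works with equivariant characteristic classes rather than with $K_0$ alone.

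Your Step~3 is also not an afterthought: the ``Adams compatibility'' you need is exactly the localization property (Lemma~\ref{l3}), which is the main technical content of the paper (all of Section~\ref{S3}). Reducing it to the smooth constant-coefficient case is not straightforward, since the equivariance data on $\MC^{\boxtimes r}$ does not commute naively with pushforward from resolutions, and the paper instead proves it directly by establishing the $\Sigma_r$-equivariant K\"unneth isomorphism $gr^F_*DR(\MC^{\boxtimes r})\simeq(gr^F_*DR(\MC))^{\boxtimes r}$ (Lemma~\ref{l32a}) and then showing, via the Lefschetz--Riemann--Roch trace, that all cross-terms with unequal filtration degrees have \emph{vanishing} localized Todd class (Proposition in \S\ref{S32}) --- the vanishing trace of a cyclic permutation on $\oplus_{j=1}^r\OO_M$ is the key computation. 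This cancellation is what produces the substitution $y\mapsto y^r$, and it cannot be seen at the level of Chern roots alone. So the proposed route has no workable Step~2, and the remaining Step~3 is the hard part you would still have to prove; the paper's three-lemma decomposition via the Atiyah--Singer transformation is the argument you need.
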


The proof of Theorem \ref{t1} makes use of the Atiyah-Singer classes of \cite{CMSS1}, combined with the Lefschetz--Riemann--Roch theorem \cite{BFQ,M}, which in the context of symmetric products is related to  the singular Adams--Riemann--Roch transformation for coherent sheaves (e.g., see \cite{FL,M,N}).

\bigskip

If $X$ is a projective variety, by pushing down to a point the result of Theorem \ref{t1}, we recover the generating series formula for the Hodge polynomials $\chi_y(X^{(n)} , \MC^{(n)}):=\chi_y([H^*(\Xs;\MC^{(n)}])$ (cf. \cite{MS09,MSS}), namely:
\begin{equation}
\sum_{n \geq 0} \chi_{-y}(X^{(n)} , \MC^{(n)}) \cdot t^n=\exp\left( \sum_{r \geq 1} \chi_{-y^r}(X,\MC) \cdot \frac{t^r}{r} \right).
\end{equation}
Indeed, over a point space, the map $d^r$ is the identity, and the $r$-th Adams operation $\Psi_r$  becomes $y \mapsto y^r$.

\bigskip

If we let $\MC$ be the constant Hodge sheaf $\Q_X^H$, we get back (by using (\ref{q})) our formula (\ref{te100}) for the motivic Hirzebruch classes. Similarly, for $\MC=f_!\Q^H_Y$ with $f:Y \to X$ an algebraic map, we get by (\ref{qq}) the generating series formula (\ref{te0}) of Theorem \ref{t0}.
Finally, for the shifted twisted intersection chain sheaf ${IC'}_X^H(\LL)$ we obtain by (\ref{tic}) the following special case of formula (\ref{te1}), as announced in \cite{MSL} for the case of the constant variation:
\begin{cor}\label{c1}
For $\LL$ a polarizable variation of Hodge structures on a smooth
open subvariety $U$ of an irreducible variety $X$,   
the following identity holds in $PH_*(X)$:
\begin{equation}\label{cl2}
\sum_{n \geq 0} {{IT_{(-y)}}_*} (X^{(n)},\LL^{(n)}) \cdot t^n= \exp \left( \sum_{r \geq 1} \Psi_r  \Big(d^r_* {{IT_{(-y)}}_*}(X,\LL) \Big) \cdot \frac{t^r}{r} \right).\end{equation} \end{cor}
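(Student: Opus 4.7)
The plan is to derive Corollary \ref{c1} as a direct specialization of Theorem \ref{t1}, by choosing the complex of mixed Hodge modules $\MC$ appropriately. Specifically, I would take $\MC := {IC'}_X^H(\LL) \in D^b\mh(X)$, the shifted twisted intersection Hodge module associated to the polarizable variation $\LL$ on the smooth open subvariety $U \subset X$.

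With this choice, the left-hand and right-hand sides of \eqref{te1} almost immediately match those of \eqref{cl2}. First I would recall the defining identity $IT_{y*}(X,\LL) := T_{y*}([{IC'}_X^H(\LL)])$, and analogously $IT_{y*}(X^{(n)},\LL^{(n)}) := T_{y*}([{IC'}_{X^{(n)}}^H(\LL^{(n)})])$, where $\LL^{(n)}$ is the generically defined ``good'' variation on the regular part of $X^{(n)}$ mentioned in the paragraph introducing \eqref{tic}. Thus on the right-hand side of \eqref{te1}, the factor $T_{(-y)*}(\MC)$ becomes precisely $IT_{(-y)*}(X,\LL)$.

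For the left-hand side, the key input is the compatibility \eqref{tic}, namely the identification
\begin{equation*}
\left({IC'}_X^H(\LL)\right)^{(n)} \;=\; {IC'}_{X^{(n)}}^H(\LL^{(n)})
\end{equation*}
in $D^b\mh(X^{(n)})$, which is recalled in the excerpt. Applying $T_{(-y)*}$ to both sides converts $T_{(-y)*}(\MC^{(n)})$ into $IT_{(-y)*}(X^{(n)},\LL^{(n)})$, and substituting into \eqref{te1} yields \eqref{cl2}. Since $\LL$ is assumed to be polarizable (hence in particular a ``good'' variation in the sense needed to form ${IC'}_X^H(\LL)$ as a mixed Hodge module), there is no integrability obstruction, and the substitution is legitimate.

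In this sense there is no real obstacle beyond quoting Theorem \ref{t1} and the formula \eqref{tic}; the main subtlety worth flagging is that the variations $\LL^{(n)}$ appearing on the right of \eqref{tic} are only generically defined on $X^{(n)}_{\rm reg}$, so the intersection Hirzebruch class $IT_{(-y)*}(X^{(n)},\LL^{(n)})$ on the left of \eqref{cl2} must be interpreted via the shifted intersection Hodge module ${IC'}_{X^{(n)}}^H(\LL^{(n)})$ rather than as a smooth twist. Once this interpretation is fixed, the corollary follows at once.
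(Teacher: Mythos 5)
Your proposal is exactly the paper's derivation: Corollary \ref{c1} is obtained by applying Theorem \ref{t1} to $\MC = {IC'}_X^H(\LL)$ and invoking the compatibility \eqref{tic} to identify $T_{(-y)*}(\MC^{(n)})$ with $IT_{(-y)*}(X^{(n)},\LL^{(n)})$. Your remark about $\LL^{(n)}$ being only generically defined (so that the intersection class must be interpreted via the shifted intersection Hodge module) is a correct and worthwhile clarification that matches the paper's own framing.
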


If $X$ is smooth and projective, and $\MC$ is the constant Hodge sheaf $\Q_X^H$, formula (\ref{te1}) specializes to Moonen's generating series formula for his generalized Todd classes $\tau_y(X^{(n)})$ (cf. \cite[p.172]{M}). Indeed, as shown in equation (17) of \cite{CMSS1}, Moonen's generalized Todd class $\tau_y(Y/G)$, which he could only define for a projective orbifold $Y/G$ (with $G$ a finite group of algebraic automorphisms of the projective manifold $Y$), 
 coincides in this context with the Brasselet--Sch\"urmann--Yokura un-normalized Hirzebruch class ${T_y}_*(Y/G)$ considered in this paper.


\subsection{Todd, $L$- and Chern classes}

We conclude this introduction with a discussion on other important special cases of the main Theorem \ref{t1} and its Corollary \ref{c1}. 

\bigskip 

If $y=0$ and $\MC=\Q^H_X$, the formula for the corresponding classes ${T_0}_*(-)$ and  ${IT_0}_*(-)$  should be compared with Moonen's generating series formula for the  Todd classes $td_*(X^{(n)})$ of symmetric products of a projective variety (see \cite{M}[p.162--164]). However, while these three classes satisfy the same generating series formula, they do not coincide in general, except in very special cases, e.g., ${T_0}_*(X)=td_*(X)$ if $X$ has only Du Bois singularities (e.g., $X$ is smooth or has only rational singularities) so that the symmetric products $X^{(n)}$ also have only  Du Bois singularities (see \cite{KS}[Cor.5.4]). If $X$ is projective with only Du Bois singularities, by taking the degrees of the zero-dimensional components in the corresponding generating series formula, we recover Moonen's generating series formula for the arithmetic genus $\chi_a$ of symmetric products of such a projective variety (cf. \cite{M}[Corollary 2.7, p.161-162]):
\begin{equation} \sum_{n \geq 0} \chi_a(X^{(n)}) t^n = \exp \left( \sum_{r \geq 1} \chi_a(X) \cdot \frac{t^r}{r} \right)= (1-t)^{-\chi_a(X)} , 
\end{equation}
where $\chi_a(X):=\chi([H^*(X;\OO_X)]).$

\bigskip 

Let us now consider the case $y=-1$, and assume that $X$ is an irreducible projective variety of even complex dimension, with $\LL$ a ``good" polarizable variation of Hodge structures of even weight generically defined on $X$. Then by taking the degree of the zero-dimensional components in (\ref{cl2}), we recover the generating series formula of \cite{MS09,MSS} for the twisted Goresky--MacPherson intersection cohomology signature $\sigma(X^{(n)},\LL^{(n)})$ of the symmetric products of $X$,  i.e., 
\begin{equation}\label{Za-sig}
\sum_{n \geq 0} {\sigma}(X^{(n)},\LL^{(n)}) \cdot t^n=\frac{(1+t)^{\frac{\sigma(X,\LL)-I\chi(X,\LL)}{2}}}{(1-t)^{\frac{\sigma(X,\LL)+I\chi(X,\LL)}{2}}},
\end{equation} 
with $I\chi(X,\LL):=\chi([IH^*(X;\LL)])$ the twisted intersection cohomology Euler characteristic of $X$. Here, the twisted intersection cohomology $IH^*(X;\LL)$ gets an induced {\it symmetric} intersection pairing in middle degree since $\LL$ is of even weight, so that the corresponding signature $\sigma(X,\LL)$ is defined.   Similarly for the twisted signatures $ {\sigma}(X^{(n)},\LL^{(n)})$ of symmetric products. Since $X$ is of even complex dimension, the sheaf complex $IC_X(\LL)$ also has a {\it symmetric} self-duality structure yielding the same signature as $\sigma(X,\LL)$.
Moreover, under our assumptions, the pure Hodge module $IC^H_X(\LL)$ is also of even weight, so that we can use 
Saito's Hodge index theorem (e.g., see \cite{MSS}[Sect.3.6]) for the following identification of the twisted intersection cohomology signature in terms of the Hodge polynomial:
$$\sigma(X,\LL)=I\chi_1(X,\LL)=\chi_y([IH^*(X;\LL)])|_{y=1}.$$
If, moreover, $X$ is smooth and $\LL$ is the constant variation,  formula (\ref{Za-sig}) was proved by Zagier \cite{Za}. With regard to characteristic classes,  formula (\ref{cl2}) specialize for $X$ smooth and projective to the generating series for Moonen's class ${\tau_1}(X^{(n)})$ of symmetric products of $X$ (see \cite{M}[Cor.2.13, p.173]). This class differs from the Thom--Milnor homology $L$-class $L_*(X^{(n)})$ by a renormalization, defined by multiplying in each even degree by a suitable power of $2$. More precisely, for any projective $G$-manifold $Y$, with $G$ a finite group of algebraic automorphisms of $Y$, one has (cf. \cite{M}[Corollary 2.10, p.171]): $$\Psi_2 {T_1}_*(Y/G)=\Psi_2 {\tau_1}(Y/G)=L_*(Y/G),$$ with $\Psi_2$ the second homological Adams operation (as defined in Theorem \ref{t0}). A formula for the Thom--Milnor $L$-classes of symmetric products was originally obtained by Zagier \cite{Za} in the manifold context, and then re-proved by Moonen in \cite{M} in the case of complex projective manifolds. For the specialization of our formula (\ref{cl2}) to $y=-1$, we need to identify the twisted classes ${IT_{-1}}_*(X,\LL)$ as well as  ${IT_{1}}_*(\Xs,\LL^{(n)})$. 
By \cite{Sch-MSRI}[Prop.5.21], we get in degree $2k$ that $${IT_{-1,k}}(X,\LL)=(1+y)^kc_k(IC'_{X}(\LL))|_{y=-1} \in H_{2k}^{BM}(\Xs;\Q)$$
(the reader should be aware that in loc. cit. the identification is in terms of  normalized Hirzebruch classes). Therefore, $${IT_{-1*}}(X,\LL)=c_0(IC'_{X}(\LL))=I\chi(X,\LL)$$ is the degree-zero MacPherson Chern class of the underlying twisted intersection cohomology complex, i.e., the corresponding twisted Euler characteristic under the identification $deg: H_0^{BM}(X)=H_0(X) \simeq \Q$. Conjecturally, 
$$\Psi_2{IT_{1}}_*(\Xs,\LL^{(n)})=L_*(\Xs,\LL^{(n)}):=L_*(IC_{\Xs}(\LL^{(n)})),$$ with $L_*(IC_{\Xs}(\LL^{(n)}))$ the Cappell-Shaneson homology $L$-class \cite{CS} of the underlying self-dual constructible sheaf complex. At least for $X$ smooth and $\LL$ defined now on all of $X$, this identification follows from equation (23) in \cite{CMSS1} because then $\LL^{(n)}$ extends as a local system on all of $\Xs$. In this case, formula (\ref{cl2}) specializes for $y=-1$ and composition with $\Psi_2$ to the following twisted version of Moonen's $L$-class formula mentioned above (the reader should be aware that the exponent $\frac{1}{2}$ is missing from loc. cit.):
\begin{equation}\label{tl}
\sum_{n \geq 0} L_* (X^{(n)},\LL^{(n)}) \cdot t^n= (1-t^2)^{-\frac{I\chi(X,\LL)}{2}} \cdot 
\exp \left( \sum_{r \geq 1} \Psi_{2r-1}  \Big(d^{2r-1}_* L_*(X,\LL) \Big) \cdot \frac{t^{2r-1}}{2r-1} \right).
\end{equation}

\bigskip

For $y=1$, $X$ projective and $\MC=\Q^H_X$, by taking degrees in formula (\ref{te1}) we recover's MacDonald's generating series formula for the Euler characteristics of symmetric products \cite{Mac}: \begin{equation} \sum_{n \geq 0} \chi(X^{(n)}) t^n = \exp \left( \sum_{r \geq 1} \chi(X) \cdot \frac{t^r}{r} \right)= (1-t)^{-\chi(X)} . 
\end{equation}
Indeed, by (\ref{def-chi}), we see that $\chi_{-1}$ is just the usual Euler characteristic. 
Similarly, by taking degrees in formula (\ref{cl2}) for the constant variation, we obtain the generating series formula for the intersection cohomology Euler characteristic $I\chi(X^{(n)})$ of the symmetric products of $X$, see \cite{MS09,MSS}. Finally, after a suitable re-normalization (as explained in \S \ref{r1}), formula (\ref{te1}) specializes for the value $y=1$ of the parameter and $\MC=\Q^H_X$ the constant Hodge module complex  to Ohmoto's generating series formula \cite{Oh} for the rationalized MacPherson--Chern classes $c_*(X^{(n)})$ of the symmetric products of $X$ (see \S \ref{r1} for details):
\begin{equation}\label{Oh}
\sum_{n \geq 0} {c_*} (X^{(n)} ) \cdot t^n= \exp \left( \sum_{r \geq 1}  d^r_* {c_*}(X)  \cdot \frac{t^r}{r} \right) .
\end{equation}

Adapting our method of proving the main Theorem \ref{t1}, we get similarly the following counterparts for bounded sheaf complexes with constructible  resp. coherent cohomology sheaves:
\begin{theorem}\label{t2} Let $X$ be a complex quasi-projective variety. Then the following formulae hold in the Pontrjagin ring $PH_*(X)$ with $\Q$-coefficients:
\begin{itemize}
\item[(a)] For $\FC \in D^b_c(X)$ a constructible sheaf complex, we have:
\begin{equation}\label{sh2}
\sum_{n \geq 0} {c_*} (\FC^{(n)} ) \cdot t^n= \exp \left( \sum_{r \geq 1}  d^r_* {c_*}(\FC)  \cdot \frac{t^r}{r} \right) ,
\end{equation}
\item[(b)] For $\GC \in D^b_{coh}(X)$ a sheaf complex of $\OO_X$-modules with coherent cohomology, we have:
\begin{equation}\label{sh3}
\sum_{n \geq 0} {td_*} (\GC^{(n)} ) \cdot t^n= \exp \left( \sum_{r \geq 1}  \Psi_r \Big( d^r_* {td_*}(\GC)  \Big)\cdot \frac{t^r}{r} \right).
\end{equation}
\end{itemize}
\end{theorem}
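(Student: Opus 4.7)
The plan is to parallel the argument outlined for Theorem \ref{t1}: use a Lefschetz-Riemann-Roch/fixed-point principle for the $\Sigma_n$-action on the $n$-fold external power, pass to $\Sigma_n$-invariants, and then sum the conjugacy-class contributions via the classical cycle-index exponential. Both parts share the same skeleton, differing only in which fixed-point theorem feeds the individual contributions.

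First I would define $\FC^{(n)}$ and $\GC^{(n)}$ as $(\pi_{n*}\FC^{\boxtimes n})^{\Sigma_n}$ and $(\pi_{n*}\GC^{\boxtimes n})^{\Sigma_n}$ in direct analogy with (\ref{e}); this is legitimate because both $D^b_c(X)$ and $D^b_{coh}(X)$ are Karoubian, so the projector onto $\Sigma_n$-invariants is well-defined. The key technical input is an equivariant refinement of the relevant characteristic class transformation: for part (b) the singular Lefschetz-Riemann-Roch theorem of Baum-Fulton-Quart/Moonen applied to the $\Sigma_n$-action on $X^n$ supplies, for each $g \in \Sigma_n$, a localized trace class $td_*^g(\GC^{\boxtimes n})$ whose $\frac{1}{n!}$-average over $g$ yields $td_*$ of the invariant summand; for part (a) the analog is (Ohmoto's) equivariant MacPherson Chern class transformation, or equivalently the additivity of $c_*$ on the underlying constructible function combined with the elementary fixed-point formula for constructible sheaves. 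In either case, an element of cycle type $\lambda=(1^{m_1}2^{m_2}\cdots)\vdash n$ has fixed locus $(X^n)^g\cong\prod_r X^{m_r}$ embedded through the iterated diagonals, and the restriction of the external power to this locus pulls back through the $r$-fold diagonal $X\hookrightarrow X^r$, producing classes of the form $d^r_* c_*(\FC)$ or $d^r_* td_*(\GC)$.

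Summing the contributions over conjugacy classes, weighted by the reciprocal centralizer orders $\prod_r r^{m_r} m_r!$, and pushing through $\pi_n$ to $X^{(n)}$, I obtain for each $n$ an expression of the form $\sum_{\lambda\vdash n}\prod_r \frac{1}{r^{m_r}m_r!}\bigl(d^r_*\,\text{class}\bigr)^{\odot m_r}$ in the Pontrjagin ring $PH_*(X)$, with $\Psi_r$ inserted in case (b). The classical plethystic identity
$$\sum_{n\geq 0}\ \sum_{\lambda\vdash n}\ \prod_{r\geq 1}\frac{a_r^{m_r}}{r^{m_r}\, m_r!}\, t^n \;=\; \exp\Big(\sum_{r\geq 1} a_r\cdot\frac{t^r}{r}\Big)$$
then converts the sum directly into (\ref{sh2}) and (\ref{sh3}). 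The absence of $\Psi_r$ in (a) reflects the fact that the constructible trace formula has no Bott-type normal bundle contribution, whereas in (b) exactly such a contribution, coming from the $g$-action on the normal bundle of the fixed locus, interacts with the diagonal pushforward to produce the $\frac{1}{r^k}$ rescaling in homological degree $2k$ encoded in $\Psi_r$.

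The hard part will be making the equivariant/Lefschetz step rigorous in the singular setting: for (b) one must invoke a singular Lefschetz-Riemann-Roch for $\Sigma_n$-equivariant coherent sheaves on $X^n$ (à la Moonen/Baum-Fulton-Quart) and check its compatibility with the Pontrjagin product $\odot$, while for (a) one must extend the equivariant MacPherson transformation from the constant sheaf to arbitrary objects of $D^b_c(X)$ and verify naturality under cross products and diagonal pushforwards. Once these compatibilities are established, the passage to the plethystic exponential is formal.
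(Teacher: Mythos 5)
Your proposal follows essentially the same strategy as the paper: define symmetric powers as $\Sigma_n$-invariants of $\pi_{n*}$ of the external power, feed the fixed-point contributions through a localized equivariant class transformation (Lefschetz--Riemann--Roch of Baum--Fulton--Quart/Moonen for Todd classes, a localized Chern class transformation for constructible sheaves), decompose by cycle type using the product structure of the fixed locus $(X^n)^\sigma\cong\prod_r X^{m_r}$, and convert the conjugacy-class sum into the plethystic exponential. The paper packages this into the same three lemmas you flag as ``the hard part'': averaging, multiplicativity, and localization. So the skeleton and the key inputs agree.

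One small discrepancy worth noting. You explain the absence of $\Psi_r$ in part (a) as ``no Bott-type normal bundle contribution.'' The paper's actual mechanism is purely representation-theoretic: after reducing pointwise to a point space, the localization step becomes the constructible function identity $tr(\FC^{\boxtimes r};\sigma_r)=tr(\FC;\mathrm{id})$, which holds because $tr(\FC^{\boxtimes r};\sigma_r)=\chi(\Psi^r[\FC])$ with $\Psi^r$ the $r$-th Adams operation on $K_0$ of finite-dimensional vector spaces, and $\chi\colon K_0(\mbox{Vect})\to\Z$ is a $\lambda$-ring isomorphism, so $\Psi^r$ acts trivially. Your normal-bundle heuristic does correctly point at where the $r^{-k}$ rescaling arises in the coherent case (Moonen's computation via the $\sigma_r$-action on $\Delta_r^*(\FC^{\boxtimes r})$), but the contrast in (a) is more precisely that the whole Adams-operation contribution collapses because the target ring there is $\Z$, not that the normal bundle simply fails to appear. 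Apart from this, your outline matches the paper.
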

Here the corresponding symmetric powers of sheaf complexes are defined as in the mixed Hodge module context \cite{MSS}. 

If the constructible sheaf complex $\FC$ underlies a complex of mixed Hodge modules $\MC$, then (\ref{sh2}) is just a corollary of our main Theorem \ref{t1}. In particular, for $\FC=\Q_X$ the constant sheaf, we get back formula (\ref{Oh}). Also, for $X$ projective and $\GC=\OO_X$ the structure sheaf on $X$, formula (\ref{sh3}) specializes to Moonen's generating series formula for Todd classes of symmetric products. In fact, Moonen's method of proof for $\OO_X$ also yields this more general result for arbitrary coherent coefficients on a quasi-projective variety.
Finally, in the projective case, taking the degrees in formulae (\ref{sh2}) and (\ref{sh3}) yields the generating series formulae for the corresponding Euler characteristics, as already obtained in \cite{MS09}[Thm.1.4].

Our method of proof for the generating series formulae follows closely the strategy developed by Hirzebruch-Zagier \cite{Za} and Moonen \cite{M}. Moreover, it relies on the use of localized characteristic class transformations for {\it singular} spaces, e.g., the Atiyah-Singer class transformation of \cite{CMSS1}, the Lefschetz-Riemann-Roch transformation of \cite{BFQ, M}, or the localized Chern class transformation of \cite{Sch10}. A corresponding localized $L$-class transformation is not available, so we derive our $L$-class formula from the corresponding one for Hirzebruch classes of intersection cohomology complexes.

\begin{ack} We thank Morihiko Saito for our joint work, and for discussions about symmetric powers of mixed Hodge modules. 

S. Cappell and J. Shaneson are partially supported by DARPA-25-74200-F6188. L. Maxim is partially supported by NSF-1005338 and by a research fellowship from the Max-Planck-Institut f\"ur Mathematik, Bonn. J. Sch\"urmann is supported by the
SFB 878 ``groups, geometry and actions". S. Yokura is partially supported by Grant-in-Aid for
Scientific Research (No. 24540085), the Ministry of Education, Culture, Sports, Science and Technology (MEXT), JAPAN.
\end{ack}


\section{Generating series for motivic Hirzebruch classes}\label{s0}
In this section, we provide a mixed-Hodge-module-free proof of Theorem \ref{t0} from the Introduction.
As shown in \cite{BSY}, the motivic Hirzebruch class transformation 
$T_{y*}$ is functorial for proper push-forwards and it commutes with cross-products, so that $T_{y*}$ also commutes with the Pontrjagin product $\odot$.  We denote by the same symbol, $T_{y*}(-)$, the induced functorial ring homomorphism 
$$T_{y*}(-): (PK_0(var/X), \odot) \to (PH_*(X), \odot).$$
By using the decomposition $$(Z \cup U)^{(n)}\simeq \cup_{i+j=n} (Z^{(i)} \times U^{(j)})$$ we get a well-defined (semi-)group homomorphism (commuting with push-forwards)
$$\zeta:(K_0(var/X), +) \to (PK_0(var/X), \odot) \ , \ \ [f:Y\to X] \mapsto 1+ \sum_{n\geq 1} [f^{(n)}:Y^{(n)} \to \Xs] \cdot t^n$$
with values in the group of multiplicative units of the Pontrjagin ring $PK_0(var/X)$. The transformation $\zeta$ can be regarded as a relative version of the {\it Kapranov zeta function} \cite{K}, which one gets back by letting $X$ be  a point space. Therefore, formula (\ref{te0}) of Theorem \ref{t0} is equivalent to the commutativity of the following diagram of (semi-)group homomorphisms:
\begin{equation}\label{500}
\begin{CD}
(K_0(var/X), +) @>{T_{y*}}>> (H_{even}^{BM}(X)\otimes \Q[y],+)\\
@V{\zeta}VV  @VV{\exp\left( \sum_{r \geq 1} \Psi_r d^r_* (-) \frac{t^r}{r} \right)}V \\
(PK_0(var/X), \odot) @>{T_{y*}}>> (PH_*(X), \odot)  .
\end{CD}
\end{equation}
It is thus enough to check it on the generators of $(K_0(var/X), +)$ given by the classes $[f:M \to X]$ with $M$ smooth and quasi-projective and $f$ proper. By functoriality with respect to proper push-forwards of  (\ref{500}), it suffices in fact to assume $f=id_M$, with $M$ smooth and quasi-projective, i.e., to prove the formula
\begin{equation}\label{600}
\sum_{n \geq 0} {{T_{(-y)}}_*} (M^{(n)}) \cdot t^n= \exp \left( \sum_{r \geq 1} \Psi_r  \left(d^r_* {{T_{(-y)}}_*}(M) \right) \cdot \frac{t^r}{r} \right).\end{equation}
Since the Hirzebruch class transformation $ {{T_{(-y)}}_*} $ commutes with open pull-backs (see \cite{BSY}[Cor.3.1(iii)]), (by taking a smooth projective compactification of $M$) we can moreover assume that $M$ is smooth and projective. Formula (\ref{600}) in this case is exactly Moonen's 
generating series formula \cite{M}[Satz $2.1_y$, p.172] for his generalized Todd classes $\tau_y(M^{(n)})$ of symmetric products of a projective manifold $M$, once we have the identification $$\tau_y(M^{(n)})={T_y}_*(M^{(n)}).$$ 
A proof of the latter identity follows from the definition of the un-normalized homology Hirzebruch classes in terms of the filtered DuBois complex $(\underline{\Omega}^{\bullet}_X,F)$, namely (see \cite{BSY}):
$$T_{y*}(X):=\sum_{i,p \geq 0} (-1)^i td_*\left(\HC^i(gr^p_F(\underline{\Omega}^{\bullet}_X))\right) \cdot (-y)^p.$$
Indeed, for a (quasi-projective) orbifold $X=Y/G$ with $Y$ smooth and $G$ a finite group of algebraic automorphisms of $Y$, one gets by \cite{DB}[Thm.5.12] that $gr^p_F(\underline{\Omega}^{\bullet}_X)$ is cohomologically concentrated in degree $p$, with:
$$\HC^p(gr^p_F(\underline{\Omega}^{\bullet}_X))=(\pi_*(\underline{\Omega}^{p}_Y))^G,$$
for $\pi:Y \to Y/G$ the projection map. So by the definition of $\tau_y(X)$ in \cite{M}[p.167], we get:
$$T_{y*}(X)=\sum_{p \geq 0} td_*\left( (\pi_*(\underline{\Omega}^{p}_Y))^G \right) \cdot y^p=: \tau_y(X).$$\hfill$\square$
\begin{remark}\rm Note that Moonen uses a slightly different notion of a homology Pontrjagin ring $(H_{ev}(X^{(\infty)};R)[[t]], \bullet)$ (with $R=\Q$  or $\Q[y]$) for a {\it compact} topological space, see \cite{M}[Ch.II, Sect.2]. 
Here $X^{(\infty)}$ is defined as the limit of the directed system of all symmetric products $\Xs$, where the inclusion $\Xs \hookrightarrow X^{(n+1)}$ is well-defined only after choosing a basepoint in $X$. Since $X$ is compact, there is no difference between homology and Borel-Moore homology, and moreover, one gets injections $H_*(\Xs;R) \hookrightarrow H_*(X^{(\infty)};R)$. Thus our Pontrjagin ring $(PH_*(X), \odot)$ is just a subring of $(H_{ev}(X^{(\infty)};R)[[t]], \bullet)$. And by definition, all of the above generating series live already in this subring. Finally, note that Moonen's notion of Pontrjagin ring only applies to projective (hence compact) varieties, while the version used in this paper can also be used in the quasi-projective context.
\end{remark}


\section{Generating series for Hirzebruch, Todd and Chern classes}\label{s1} 
In this section, we prove our main Theorem \ref{t1} and Theorem \ref{t2}. 

\subsection{Hirzebruch classes of symmetric powers of mixed Hodge modules}\label{mhm} 
An essential ingredient in the proof of Theorem \ref{t1} is the Atiyah--Singer class transformation (cf. \cite{CMSS1}) $${T_y}_*(-; g):K_0(\mh^G(X)) \to H^{BM}_{ev}(X^g) \otimes \C[y^{\pm 1}],$$ which is defined by combining Saito's theory with the Lefschetz--Riemann--Roch transformation 
$$td_*(-; g):K_0(\co^G(X)) \to H^{BM}_{ev}(X^g;\C)$$
of Baum--Fulton--Quart \cite{BFQ} and Moonen \cite{M}. These transformations are defined for any complex quasi-projective variety $X$ acted upon by a finite group $G$ of algebraic automorphisms. Here $K_0(\mh^G(X))$ denotes the Grothendieck group of equivariant mixed Hodge modules, which is identified with a suitable Grothendieck group of ``weakly" equivariant complexes of mixed Hodge modules (see \cite[Appendix A]{CMSS1}). Also, $K_0(\co^G(X))$ denotes the Grothendieck group of $G$-equivariant algebraic coherent sheaves on $X$. More details on the construction of the Atiyah--Singer class transformation ${T_y}_*(-; g)$ will be given 
in \S \ref{S3}, as needed.

\bigskip

For the following notions and strategy of proof we follow Hirzebruch-Zagier \cite{Za}[Ch.II] and Moonen \cite{M}[Ch.II, Sect.2], which deal with $L$-classes of symmetric products of a rational homology manifold and, respectively,  Todd classes of symmetric products of a complex projective variety.

\bigskip 

Let $\sigma \in \Sigma_n$ have cycle partition $\lambda=(k_1, k_2, \cdots )$, i.e.,  $k_r$ is the number of length $r$ cycles in $\sigma$ and $n=\sum_r r \cdot k_r$. Let $$\pi^{\sigma}:(X^n)^{\sigma} \to X^{(n)}$$ denote the composition of the inclusion of the fixed point set $(X^n)^{\sigma} \hookrightarrow X^n$ followed by the projection $\pi_n:X^n \to X^{(n)}$.
For a cycle $A$ of $\sigma$, we let $\vert A \vert$ denote its length. Then 
$$(X^n)^{\sigma} \simeq \prod_{{\text A  = \, \, cycle \ in} \ \sigma} 
(X^{|A|})^A\simeq \prod_r \left( (X^r)^{\sigma_r} \right)^{k_r} \simeq \prod_r \Delta_r(X)^{k_r} \simeq X^{k_1+k_2+\cdots} \ ,$$ 
where $\sigma_r$ denotes a cycle of length $r$, and $\Delta_r(X)$ is the diagonal in $X^r$. Also,  
$(X^{|A|})^A \simeq X$, diagonally embedded in $X^{\vert A \vert}$.  Here the inclusion $X^{\vert A \vert} \hookrightarrow X^n$ is given by $X_{j_1} \times X_{j_2} \times \cdots $, for $A=(j_1,j_2, \cdots)$ and with $X_j$ on the $j$-th place in $X^n$. Then the projection $\pi^{\sigma}:(X^n)^{\sigma} \to X^{(n)}$ is the product (over cycles $A$ of $\sigma$) of projections $$\pi^{A}:X \to X^{(\vert A \vert)}$$ defined by the composition $$\pi^{A}:X \simeq \Delta_{\vert A \vert}(X) \hookrightarrow X^{\vert A \vert} \to X^{(\vert A \vert)},$$ with $$\prod_{A}  X^{(\vert A \vert)} \to X^{(n)}$$ induced by the Pontrjagin product. In the notations of Theorem \ref{t1}, this amounts to saying that $\pi^{\sigma}$ is the product of projections $$d^r:X \simeq \Delta_r(X) \overset{i_r}{\hookrightarrow} X^r \overset{\pi_r}{\to} X^{(r)},$$ where each $r$-cycle contributes a copy of $d^r$.

\bigskip

Theorem \ref{t1} is a consequence of the following sequence of reductions:

\begin{lemma}[Averaging property]\label{l1} For  $\MC \in D^b\mh(X)$ and every $n \geq 0$, we have:
\begin{equation}\label{le1}
{T_{y}}_*(\MC^{(n)}) =\frac{1}{n!}\sum_{\sigma \in \Sigma_n} \pi^{\sigma}_*{T_{y}}_*(\MC^{\boxtimes{n}}; \sigma).
\end{equation}
\end{lemma}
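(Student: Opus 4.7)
The plan is to deduce~(\ref{le1}) by combining the Lefschetz--Riemann--Roch compatibility of the Atiyah--Singer class transformation with a character-orthogonality argument. Since $\MC^{(n)} = (\pi_{n*}\MC^{\boxtimes n})^{\Sigma_n}$ and $\Sigma_n$ acts trivially on $X^{(n)}$, the passage to invariants is effected by the idempotent $\frac{1}{n!}\sum_{\sigma}\sigma \in \Q[\Sigma_n]$, which accounts for the normalization on the right-hand side of~(\ref{le1}).

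The first ingredient is Lefschetz--Riemann--Roch. The map $\pi_n : X^n \to X^{(n)}$ is $\Sigma_n$-equivariant with trivial action on the target, so $(X^{(n)})^{\sigma} = X^{(n)}$ and the restriction of $\pi_n$ to the fixed locus $(X^n)^{\sigma}$ is precisely the map $\pi^{\sigma}$ defined above. Compatibility of the Atiyah--Singer class transformation of \cite{CMSS1} with equivariant proper pushforwards -- the mixed-Hodge-module incarnation of the classical Lefschetz--Riemann--Roch theorem of \cite{BFQ, M} -- then yields, for every $\sigma \in \Sigma_n$,
\begin{equation*}
T_{y*}(\pi_{n*}\MC^{\boxtimes n}; \sigma) = \pi^{\sigma}_*\, T_{y*}(\MC^{\boxtimes n}; \sigma) \ \in \ H^{BM}_{ev}(X^{(n)}) \otimes \C[y^{\pm 1}] .
\end{equation*}

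The second ingredient is the projection to invariants. Because $\Sigma_n$ acts trivially on $X^{(n)}$ and $D^b\mh^{\Sigma_n}(X^{(n)})$ is a $\Q$-linear Karoubian category, any object $\mathcal{N} \in D^b\mh^{\Sigma_n}(X^{(n)})$ admits an isotypic decomposition $[\mathcal{N}] = \sum_{\rho} [V_{\rho}] \cdot [\mathcal{N}_{\rho}]$ in the Grothendieck group, with $\rho$ running over the irreducible $\Q$-representations of $\Sigma_n$. The Atiyah--Singer class then becomes the character-weighted trace
$$T_{y*}(\mathcal{N}; \sigma) = \sum_{\rho} \chi_{\rho}(\sigma)\, T_{y*}(\mathcal{N}_{\rho}),$$
and averaging over $\Sigma_n$ singles out the trivial isotype by orthogonality of characters:
$$\frac{1}{n!}\sum_{\sigma \in \Sigma_n} T_{y*}(\mathcal{N}; \sigma) = T_{y*}(\mathcal{N}^{\Sigma_n}) .$$
Applying this identity to $\mathcal{N} = \pi_{n*}\MC^{\boxtimes n}$ and substituting the Lefschetz--Riemann--Roch formula into each term of the average yields~(\ref{le1}) immediately. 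Even though each individual term $T_{y*}(-; \sigma)$ a priori has complex coefficients, the $\Sigma_n$-average descends to $H^{BM}_{ev}(X^{(n)}) \otimes \Q[y^{\pm 1}]$, as required.

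The main obstacle is to confirm that both ingredients remain valid for complexes of mixed Hodge modules on a \emph{singular} quasi-projective variety. This is precisely the content of \cite{CMSS1}: the Atiyah--Singer transformation for mixed Hodge modules is built by threading Saito's graded de Rham functor through the equivariant Baum--Fulton--Quart/Moonen construction, and its compatibility with equivariant proper pushforwards is inherited from the coherent setting. Once these two ``black-box'' facts are granted, the averaging identity of Lemma~\ref{l1} is entirely formal.
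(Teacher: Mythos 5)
Your proof is correct, and it essentially reconstructs the argument behind \cite{CMSS1}[Theorem 5.4], which the paper invokes as a one-line citation. The two ingredients you assemble, namely equivariant proper pushforward compatibility of the Atiyah--Singer transformation (inherited from the Lefschetz--Riemann--Roch transformation) and the isotypic decomposition with character orthogonality over the trivially-acted quotient, are precisely the content of that cited theorem's proof, so there is no genuinely different route here, only a welcome unpacking of the black box.
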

\begin{proof} This follows directly from \cite{CMSS1}[Theorem 5.4], by regarding the external product $\MC^{\boxtimes{n}}$ with its $\Sigma_n$-action (as defined in \cite{MSS}) as a weakly equivariant complex.

\end{proof}

\begin{lemma}[Multiplicativity]\label{l2} If $\sigma \in \Sigma_n$ has cycle-type $(k_1, k_2, \cdots)$, then:
\begin{equation}\label{le2}
{T_{y}}_*(\MC^{\boxtimes{n}}; \sigma)=\prod_r \left( {T_{y}}_*(\MC^{\boxtimes{r}}; \sigma_r) \right)^{k_r}.
\end{equation}
Therefore,
\begin{equation}\label{le3}
\pi^{\sigma}_*{T_{y}}_*(\MC^{\boxtimes{n}}; \sigma)=\prod_r \left( d^r_*{T_{y}}_*(\MC^{\boxtimes{r}}; \sigma_r) \right)^{k_r}
\end{equation}
\end{lemma}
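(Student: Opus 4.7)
The plan is to exploit the fact that a permutation $\sigma$ with cycle-type $(k_1,k_2,\ldots)$ acts on $X^n$ in a way that respects the product decomposition induced by the cycles of $\sigma$, and then to invoke multiplicativity of the Atiyah--Singer class transformation under external products with commuting group actions.

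\textbf{Step 1 (cycle decomposition).} Grouping the factors of $X^n$ according to the cycles $A$ of $\sigma$, we obtain an isomorphism $X^n \cong \prod_A X^{|A|}$ under which the $\sigma$-action becomes the product of the cyclic actions of each $A$ on its corresponding factor $X^{|A|}$. By the construction of the external product of mixed Hodge modules with their $\Sigma_n$-action in \cite{MSS}, this identification is compatible with the $\sigma$-equivariant structure: $\MC^{\boxtimes n}$ becomes $\boxtimes_A \MC^{\boxtimes |A|}$, with each factor $\MC^{\boxtimes |A|}$ carrying the $\sigma_{|A|}$-action coming from the cycle $A$ acting on $X^{|A|}$. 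Passing to fixed points then yields $(X^n)^\sigma \cong \prod_A (X^{|A|})^A \cong \prod_A \Delta_{|A|}(X)$.

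\textbf{Step 2 (multiplicativity of $T_{y*}(-;g)$).} The Atiyah--Singer class transformation is built from the Lefschetz--Riemann--Roch transformation $td_*(-;g)$ of \cite{BFQ, M} applied to the graded pieces of the Hodge filtration on the de Rham complex of the underlying (weakly) equivariant mixed Hodge module. Since $td_*(-;g)$ satisfies a Künneth-type multiplicativity property for external products of equivariant coherent sheaves with respect to commuting group actions (this is an equivariant version of the multiplicativity of the Todd class transformation of Baum--Fulton--MacPherson), and since taking external products of mixed Hodge modules is compatible with the Hodge filtration and de Rham functor on the nose, one deduces that $T_{y*}(-;g)$ itself is multiplicative under external products with commuting actions. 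Applying this to the decomposition of Step~1 gives
\[
T_{y*}(\MC^{\boxtimes n};\sigma) \;=\; \boxtimes_A \, T_{y*}(\MC^{\boxtimes |A|};\sigma_{|A|}) \;=\; \prod_r \bigl( T_{y*}(\MC^{\boxtimes r};\sigma_r) \bigr)^{\boxtimes k_r},
\]
which under the identification $(X^n)^\sigma \cong X^{k_1+k_2+\cdots}$ is exactly the equality \eqref{le2}.

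\textbf{Step 3 (push-forward and the Pontrjagin product).} The map $\pi^\sigma$ factors as the product of the maps $\pi^A\colon X\cong \Delta_{|A|}(X)\hookrightarrow X^{|A|}\to X^{(|A|)}$, each of which is precisely $d^{|A|}$, followed by the map $\prod_A X^{(|A|)}\to X^{(n)}$ induced by the Pontrjagin product on symmetric products. Combining this with Step~2 and using that push-forward commutes with external products, the definition of $\odot$ converts the external product of the pushed-forward classes into the Pontrjagin product in $PH_*(X)$, yielding \eqref{le3}.

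\textbf{Main obstacle.} The one nontrivial point is the multiplicativity of $T_{y*}(-;g)$ for external products with commuting group actions used in Step~2. The combinatorial reduction in Steps~1 and~3 is purely formal once this is in hand; the real content is that the Lefschetz--Riemann--Roch trace behaves multiplicatively in such a setting, and that this behavior lifts through Saito's formalism to the level of mixed Hodge modules. I would expect to invoke (or quickly re-derive) this property from the treatment in \cite{CMSS1} together with the construction of equivariant external products of mixed Hodge modules in \cite{MSS}.
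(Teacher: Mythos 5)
Your proposal is correct and follows essentially the same route as the paper: it reduces \eqref{le2} to the multiplicativity of the Atiyah--Singer transformation under external products with commuting group actions (the paper simply cites \cite{CMSS1}[Cor.~4.2] for this), and then derives \eqref{le3} by pushing forward along $\pi^\sigma$, which factors through the product of the maps $d^{|A|}$ and the Pontrjagin multiplication. One minor caveat: the compatibility of $gr^F_*DR$ with equivariant external products holds up to a natural ($\Sigma_r$-equivariant) quasi-isomorphism rather than literally ``on the nose'' as you assert, but this is all that is needed at the level of Grothendieck groups and does not affect the argument.
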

\begin{proof} This is a consequence of the multiplicativity property of the Atiyah--Singer 
 class  transformation, see \cite{CMSS1}[Corollary 4.2]. 

\end{proof}

\begin{lemma}[Localization]\label{l3} The following identification holds in $H^{BM}_{ev}(X) \otimes \Q[y^{\pm 1}] \subset H^{BM}_{ev}(X) \otimes \C[y^{\pm 1}]$:
\begin{equation}\label{le4}
{T_{(-y)}}_*(\MC^{\boxtimes{r}}; \sigma_r)=\Psi_r {T_{(-y)}}_*(\MC),
\end{equation}
with $\Psi_r$ the $r$-th homological Adams operation, which is defined on $H^{BM}_{2k}(X;\Q)$ ($k \in \Z$) by multiplication by $\frac{1}{r^k}$, together with $y \mapsto y^r$.
\end{lemma}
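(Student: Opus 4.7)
\textbf{Plan for proving Lemma \ref{l3}.} The strategy is to apply the Atiyah--Singer class transformation to $\MC^{\boxtimes r}$ equipped with its cyclic $\sigma_r$-action on $X^r$, localize to the diagonal $\Delta_r(X)\simeq X$ via Lefschetz--Riemann--Roch, and match the resulting characteristic-class expression to $\Psi_r {T_{(-y)}}_*(\MC)$ via a cyclotomic identity on the un-normalized Hirzebruch power series $Q_y(\alpha)$.

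First I would identify the geometric data underlying the localization. The $\sigma_r$-fixed locus of $X^r$ is the diagonal $\Delta_r(X)\simeq X$, and on the smooth locus of $X$ the normal bundle $N$ of $\Delta_r(X)\hookrightarrow X^r$ is the quotient of $(TX)^{\oplus r}$ by its diagonal copy, on which $\sigma_r$ acts by cyclic permutation with eigenvalues the non-trivial $r$-th roots of unity $\zeta,\zeta^2,\ldots,\zeta^{r-1}$ (for $\zeta=e^{2\pi i/r}$), each of multiplicity $\dim X$. By the construction in \cite{MSS}, the restriction of $\MC^{\boxtimes r}$ with its cyclic $\sigma_r$-action to the diagonal is a canonical $\sigma_r$-equivariant object built from $\MC$. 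Next I would unpack the Atiyah--Singer transformation from \cite{CMSS1}: schematically, ${T_y}_*(-;g)$ is obtained by applying the Lefschetz--Riemann--Roch transformation $td_*(-;g)$ of Baum--Fulton--Quart--Moonen to the associated graded pieces of Saito's filtered de Rham complex, weighted by appropriate powers of $(-y)$. Since the filtered de Rham functor commutes with external products and the $\sigma_r$-action \cite{MSS}, and since proper-push-forward functoriality lets us reduce to the smooth situation on which $TX$, $N$, and Chern roots $\alpha_i$ of $TX$ are defined, the problem becomes a classical Lefschetz--Riemann--Roch computation for cyclic tensor powers of coherent sheaves.

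Then I would carry out the explicit characteristic-class computation on the diagonal. Per Chern root $\alpha_i$ of $TX$, the tangent contribution is $Q_{(-y)}(\alpha_i)=\alpha_i(1-ye^{-\alpha_i})/(1-e^{-\alpha_i})$, while the normal-bundle contribution factors as $\prod_{j=1}^{r-1}(1-y\zeta^j e^{-\alpha_i})/(1-\zeta^j e^{-\alpha_i})$. Applying the cyclotomic identity $\prod_{j=0}^{r-1}(1-\zeta^j x)=1-x^r$ to both $x=e^{-\alpha_i}$ and $x=ye^{-\alpha_i}$, these assemble to
\[
\alpha_i\cdot\frac{1-y^r e^{-r\alpha_i}}{1-e^{-r\alpha_i}}\;=\;\frac{1}{r}\,Q_{(-y^r)}(r\alpha_i).
\]
Over the $\dim X$ Chern roots the overall factor $r^{-\dim X}$ assembles, and the cohomological substitution $\alpha_i\mapsto r\alpha_i$ combined with $y\mapsto y^r$ is precisely the Adams operation $\psi^r$. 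Under Poincar\'e-duality-style transfer to Borel--Moore homology, this is exactly the homological Adams operation $\Psi_r$, acting by $r^{-k}$ on $H^{BM}_{2k}(X;\Q)$ and by $y\mapsto y^r$. The cyclic trace on the coefficient side $\MC^{\boxtimes r}|_\Delta$ contributes the analogous ``Adams'' transformation on the Chern character of the associated graded de Rham pieces of $\MC$, producing $\Psi_r$ applied to all of ${T_{(-y)}}_*(\MC)$ rather than merely to the contribution from the structure sheaf.

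The main obstacle will be ensuring that the bookkeeping is compatible with the possibly singular setting of $\MC\in D^b\mh(X)$ and with Saito's filtered de Rham framework; this is handled by proper functoriality and the careful treatment of the filtered de Rham complex as in \cite{CMSS1, MSS}. A secondary subtlety is that the localized expression a priori lives in $\C[y^{\pm 1}]$-coefficients, but Galois invariance (all non-trivial primitive $r$-th roots of unity are $\mathrm{Gal}(\overline{\Q}/\Q)$-conjugate) together with the cyclotomic identity above guarantees descent to $\Q[y^{\pm 1}]$, as claimed. This step is the mixed-Hodge-module refinement of Moonen's Lefschetz-number computation for $\tau_y$ of symmetric products of projective manifolds \cite[Satz $2.1_y$]{M}.
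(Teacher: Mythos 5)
Your overall strategy matches the paper's: interpret the Atiyah--Singer class as the Lefschetz--Riemann--Roch transformation applied to the graded de Rham pieces $gr^F_*DR$, localize at the diagonal $(X^r)^{\sigma_r}\simeq X$, and use the cyclotomic identity on the $Q_y$-series to produce $\Psi_r$. Your cyclotomic manipulation is correct and is essentially what underlies Moonen's coherent-sheaf localization result, which the paper invokes as Lemma \ref{l33a}.

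However, two steps are genuinely problematic. First, the claim that ``proper-push-forward functoriality lets us reduce to the smooth situation on which $TX$, $N$, and Chern roots $\alpha_i$ of $TX$ are defined'' does not work here: unlike $K_0(var/X)$, the Grothendieck group $K_0(\mh(X))$ is not generated by classes proper-pushed-forward from smooth varieties with constant coefficients, and the statement concerns arbitrary $\MC\in D^b\mh(X)$ on a possibly singular $X$, for which Chern roots of $TX$ are unavailable. The paper's actual reduction (following Moonen) is via a closed embedding $i:X\hookrightarrow M$ into a smooth ambient $M$, $\Sigma_r$-equivariant locally free resolutions on $M^r$, and the cyclotomic computation on the ambient tangent bundle $TM$; see the proof of Lemma \ref{l33a}. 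Second, and more fundamentally, the step you pass over in a sentence (``the cyclic trace on the coefficient side contributes the analogous Adams transformation'') is precisely the technical core of the proof. One needs the $\Sigma_r$-equivariant identification $gr^F_*DR(\MC^{\boxtimes r})\simeq (gr^F_*DR(\MC))^{\boxtimes r}$ (Lemma \ref{l32a}, which is a nontrivial compatibility of Saito's filtered de Rham functor with external products), and then the decomposition of $gr^F_p DR^{\Sigma_r}(\MC^{\boxtimes r})$ as a direct sum over multi-indices $(q_1,\dots,q_r)$ with $\sum q_j=p$, together with the vanishing argument showing that the trace of $\sigma_r$ kills every non-constant multi-index (because the trace of the signed order-$r$ cyclic permutation on a free $r$-element orbit is zero). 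Only the constant multi-indices $(q,\dots,q)$ with $p=qr$ survive, and this is exactly what implements the $y\mapsto y^r$ part of $\Psi_r$. Without making this multi-index/trace-vanishing argument precise (the paper's Proposition giving equation \eqref{imp}), the $y$-grading bookkeeping in your conclusion is unjustified.
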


The localization property of Lemma \ref{l3} is the key technical point, and also the most difficult one. 
Its proof will be given later on,  in  Section \ref{S3}, while in the remaining part of this section  we concentrate on generating series formulae.

\bigskip

We now have all the ingredients for proving Theorem \ref{t1}.

\begin{proof}  For a given {\it partition} $\Pi=(k_1,k_2,\cdots,k_n)$ of $n$, i.e., $n=\sum_r k_r \cdot r$, denote by $N_{\Pi}$ the number of elements $\sigma \in \Sigma_n$ of cycle-type $\Pi$. Then it's easy to see that $$N_{\Pi}=\frac{n!}{k_1!k_2! \cdots 1^{k_1}2^{k_2} \cdots}.$$
Formula (\ref{te1}) follows now  from the following sequence of identities:
\begin{eqnarray*} \sum_n {T_{(-y)}}_*(\MC^{(n)}) \cdot t^n
&\overset{(\ref{le1})}{=}& \sum_{n} t^n \cdot \frac{1}{n!}\sum_{\sigma \in \Sigma_n} \pi^{\sigma}_*{T_{(-y)}}_*(\MC^{\boxtimes{n}};\sigma) \\
&\overset{(\ref{le3})}{=}& \sum_n \frac{t^n}{n!} \cdot \sum_{\Pi=(k_1,k_2,\cdots k_n)}  N_{\Pi} \prod_{r=1}^n \left( d^r_*{T_{(-y)}}_*(\MC^{\boxtimes{r}};\sigma_r) \right)^{k_r}\\
&=& \sum_n \sum_{\Pi=(k_1,k_2,\cdots k_n)} \frac{t^{k_1 \cdot 1+k_2 \cdot 2 + \cdots}}{k_1!k_2! \cdots 1^{k_1}2^{k_2} \cdots} \prod_{r=1}^n \left( d^r_*{T_{(-y)}}_*(\MC^{\boxtimes{r}};\sigma_r) \right)^{k_r}\\
&=&  \sum_n \sum_{\Pi=(k_1,k_2,\cdots k_n)} \prod_{r=1}^n \frac{t^{{k_r}\cdot r}}{k_r! r^{k_r}} \cdot \left( d^r_*{T_{(-y)}}_*(\MC^{\boxtimes{r}};\sigma_r) \right)^{k_r} \\
&=& \prod_{r=1}^{\infty} \left(  \sum_{k_r=0}^{\infty} \frac{t^{k_r \cdot r}}{k_r!r^{k_r}} \cdot \left( d^r_*{T_{(-y)}}_*(\MC^{\boxtimes{r}};\sigma_r) \right)^{k_r} \right)\\
&=& \prod_{r=1}^{\infty} \left(  \sum_{k_r=0}^{\infty} \frac{1}{k_r!}  \cdot  \left( d^r_*{T_{(-y)}}_*(\MC^{\boxtimes{r}};\sigma_r) \cdot \frac{t^r}{r} \right)^{k_r} \right)\\
&=& \prod_{r=1}^{\infty} \exp \left( d^r_*{T_{(-y)}}_*(\MC^{\boxtimes{r}};\sigma_r) \cdot  \frac{t^r}{r}   \right)\\
&=& \exp \left( \sum_{r=1}^{\infty} d^r_*{T_{(-y)}}_*(\MC^{\boxtimes{r}};\sigma_r) \cdot \frac{t^r}{r} \right)\\
&\overset{(\ref{le4})}{=}& \exp \left( \sum_{r=1}^{\infty} \Psi_r \left( d^r_*{T_{(-y)}}_*(\MC) \right) \cdot \frac{t^r}{r} \right),
\end{eqnarray*} 
where in the last equality we also use the functoriality with respect to proper push-down of the homological Adams transformation $\Psi_r$.

\end{proof}

The proof of Theorem \ref{t2} proceeds formally in exactly the same way as the above proof, once the {\it averaging}, {\it multiplicativity} and {\it localization} properties hold for the corresponding Lefschetz-Riemann-Roch transformation $td_*(-;g)$  of Baum--Fulton--Quart \cite{BFQ} and Moonen \cite{M}, and respectively, for the Sch\"urmann's localized Chern classes $c_*(-;g)$ from \cite{Sch10}[Ex.1.3.2]. These will be explained in the next subsections.


\subsection{Lefschetz-Riemann-Roch transformation}\label{Todd}

Let $Z$ be a complex quasi-projective algebraic variety, and $G$ a finite group of algebraic automorphisms of $Z$. Let \begin{equation}
td_*(-; g):K_0(\co^G(Z)) \to H^{BM}_{ev}(Z^g;\C)
\end{equation}
be the Lefschetz--Riemann--Roch transformation of Baum--Fulton--Quart \cite{BFQ} and Moonen \cite{M}, where $K_0({\rm Coh}^G(Z))$ is the Grothendieck group of $G$-equivariant algebraic coherent sheaves on $Z$.
Its values yield homology classes localized on the fixed-point set $Z^g$.

For a complex $\E \in D^b_{coh}(Z)$ which is weakly $G$-equivariant in the sense of \cite{CMSS1}, its cohomology sheaves are $G$-equivariant coherent sheaves, so that we can define its class in the Grothendieck  group $K_0(\co^G(Z))$ as $$[\E]:=\sum\nolimits_{i} (-1)^i \cdot [\HC^i(\E)].$$
Let $\pi:Z \to Z/G$ be the quotient map. Then $\pi_*\E \in D^b_{coh}(Z/G)$ is a weakly  $G$-equivariant  complex on $Z/G$. We can define its $G$-invariant part $(\pi_*\E)^G$ by using the corresponding projector as in \cite{CMSS1}[p.20] since $ D^b_{coh}(Z)$ is a $\Q$-linear additive Karoubian category. The class $[(\pi_*\E)^G]\in K_0(\co(Z/G))$ in the  Grothendieck group is defined as above.
With the above notations, we now have the following
\begin{lemma}[Averaging property]\label{l100}
\begin{equation}\label{800} td_*\Big((\pi_*\E)^G\Big) =\frac{1}{|G|}\sum_{g \in G} \pi^g_*td_*(\E;g),\end{equation} with $\pi^g:Z^g \to Z/G$ the induced map.
\end{lemma}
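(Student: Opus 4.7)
The plan is to reduce the identity to the classical Lefschetz--Riemann--Roch theorem of Baum--Fulton--Quart and Moonen applied to the quotient map $\pi: Z \to Z/G$, combined with character theory to isolate the $G$-invariant part. Since $G$ acts trivially on the quotient $Z/G$, the derived push-forward $\pi_*\E$ is a $G$-equivariant coherent complex on the $G$-trivial space $Z/G$, so its class in $K_0(\co^G(Z/G)) \otimes \C$ decomposes isotypically as $[\pi_*\E] = \sum_{\rho \in \widehat{G}} [(\pi_*\E)_\rho] \otimes V_\rho$, with the invariant part $(\pi_*\E)^G = (\pi_*\E)_{\rm triv}$ corresponding to the trivial representation.

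First, for each $g \in G$, I would apply the Baum--Fulton--Quart/Moonen Lefschetz--Riemann--Roch theorem to the proper equivariant morphism $\pi$. Since the induced map on fixed loci is $\pi^g: Z^g \to (Z/G)^g = Z/G$, this yields
\begin{equation*}
\pi^g_*\, td_*(\E; g) = td_*(\pi_*\E; g) \in H^{BM}_{ev}(Z/G; \C).
\end{equation*}
Averaging over $g$, the claim is reduced to the following ``character formula'' for any $G$-equivariant coherent complex $\FC$ on a $G$-trivial base $Y$:
\begin{equation*}
\frac{1}{|G|}\sum_{g \in G} td_*(\FC; g) = td_*(\FC^G).
\end{equation*}

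For this second step, I would use the isotypic decomposition $\FC = \bigoplus_{\rho} \FC_\rho \otimes V_\rho$ together with the identity $td_*(\FC; g) = \sum_\rho \chi_\rho(g) \cdot td_*(\FC_\rho)$, which follows from the construction of $td_*(-;g)$ via localized equivariant $K$-theory: on a $G$-trivial base, the $g$-eigendecomposition refines the isotypic decomposition, and the contribution of the $\rho$-part is exactly $\operatorname{tr}(g | V_\rho) = \chi_\rho(g)$ times $td_*(\FC_\rho)$. Schur orthogonality $\frac{1}{|G|}\sum_g \chi_\rho(g) = \delta_{\rho,{\rm triv}}$ then isolates the invariant summand $\FC_{\rm triv} = \FC^G$, yielding the identity.

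The main obstacle is pinning down the character formula for $td_*(-;g)$ on a $G$-trivial base --- this requires unpacking how the Baum--Fulton--Quart/Moonen transformation interacts with the isotypic decomposition, which must be extracted from the construction by localizing at the cyclic subgroup $\langle g \rangle$. The extension from single coherent sheaves to bounded complexes $\E \in D^b_{\rm coh}(Z)$ is routine, since both sides are additive on distinguished triangles and factor through $K_0$. Indeed, this averaging formula is precisely the coherent-sheaf shadow of \cite{CMSS1}[Theorem 5.4], which is exactly the Hodge-theoretic enhancement invoked in the proof of Lemma \ref{l1}.
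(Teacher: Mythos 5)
Your proof is correct and is in effect the same as the paper's: the paper reduces to $G$-equivariant coherent sheaves using exactness of $\pi_*$ and $(-)^G$ and then simply cites Moonen [M, p.170]; your two steps --- Lefschetz--Riemann--Roch covariance of $td_*(-;g)$ applied to the finite equivariant map $\pi$, followed by the isotypic decomposition and Schur orthogonality for $td_*(-;g)$ on the $G$-trivial base $Z/G$ --- spell out exactly what lies behind that citation. The one thing you leave tacit that the paper flags explicitly is that the quasi-projective (as opposed to projective) case requires Moonen's extension of the transformation [M, Ch.III, Sect.1], but this does not change the structure of the argument.
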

\begin{proof} Since $\pi_*$ and $(-)^G$ are exact functors, it suffices to prove formula (\ref{800}) for $\E$ a $G$-equivariant coherent sheaf. For  a projective variety $Z$, this fact follows from \cite{M}[p.170]. 
For a quasi-projective variety $Z$, it still holds true as soon as the Lefschetz--Riemann--Roch transformation $td_*(-; g)$ is extended to this quasi-projective context as in \cite{M}[Ch.III, Sect.1]. 

\end{proof}

Let now $X$ be a quasi-projective variety with $\GC \in D^b_{coh}(X)$ given. 
Then we can view the external product $\GC^{\boxtimes}$ with its induced $\Sigma_n$-action (e.g., see \cite{MSS}) as a weakly equivariant $\Sigma_n$-complex, and the $n$-th symmetric power $\GC^{(n)}$ of $\GC$ is defined by 
$$\GC^{(n)}:=({\pi_n}_* \GC^{\boxtimes})^{\Sigma_n} \in D^b_{coh}(\Xs).$$
The needed {\it averaging} property for the Lefschetz-Riemann-Roch transformation, i.e.,  
\begin{equation}\label{700} td_*(\GC^{(n)}) =\frac{1}{n!}\sum_{\sigma \in \Sigma_n} \pi^{\sigma}_*td_*(\GC^{\boxtimes{n}}; \sigma)\end{equation}
follows now from Lemma \ref{l100}.

\bigskip

Let us now discuss the {\it multiplicativity} property for the Lefschetz-Riemann-Roch transformation:
\begin{lemma}[Multiplicativity]\label{l101}
Let $Z$, $Z'$ be complex quasi-projective algebraic varieties, and $G$ and $G'$ finite groups of algebraic automorphisms of $Z$ and resp. $Z'$. Then for $\E \in D^b_{coh}(Z)$ a weakly $G$-equivariant complex and $\E' \in D^b_{coh}(Z')$ a weakly $G'$-equivariant complex, one has for $g \in G$ and $g' \in G'$ the following identity:
\begin{equation}\label{mult-lrr}
td_*(\E \boxtimes \E';(g,g'))=td_*(\E;g) \boxtimes td_*(\E';g').
\end{equation}
\end{lemma}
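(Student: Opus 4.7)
The plan is to reduce (\ref{mult-lrr}) to a smooth, locally free setting where the Atiyah--Bott/Baum--Fulton--Quart local formula for $td_*(-;g)$ factors across external products by inspection.

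First I would pass from complexes to equivariant coherent sheaves. Both sides of (\ref{mult-lrr}) are additive in distinguished triangles (just as in the discussion preceding Lemma \ref{l100}), so it suffices to treat a pair $\E\in\co^G(Z)$, $\E'\in\co^{G'}(Z')$. A K\"unneth-type argument then identifies $[\E\boxtimes\E']\in K_0(\co^{G\times G'}(Z\times Z'))$ with the image of $[\E]\otimes[\E']$ under the exterior product pairing on equivariant Grothendieck groups, obtained from flatness of one factor after $G$-equivariant locally free resolution on an ambient smooth variety. Next I would reduce to the smooth case by choosing $G$-, resp.\ $G'$-equivariant closed embeddings $Z\hookrightarrow M$ and $Z'\hookrightarrow M'$ into smooth quasi-projective manifolds (possible since $G,G'$ are finite). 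Then $Z\times Z'\hookrightarrow M\times M'$ is $(G\times G')$-equivariant, and $(M\times M')^{(g,g')}=M^g\times(M')^{g'}$. By the very construction of the Lefschetz--Riemann--Roch transformation in the singular quasi-projective setting (Moonen \cite{M}, Ch.III; Baum--Fulton--Quart \cite{BFQ}), $td_*(\E;g)$ is computed from a $G$-equivariant locally free resolution of $i_*\E$ on $M$ by applying the $g$-localized Chern character and capping with the $g$-equivariant Todd class of $TM$ restricted to $M^g$. Tensoring such resolutions on $M$ and $M'$ gives a $(G\times G')$-equivariant locally free resolution of $(i\times i')_*(\E\boxtimes\E')$, so the problem reduces to checking factorization of the ambient smooth ingredients on $M^g\times(M')^{g'}$.

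In this smooth/locally free setting, the identity is classical: for equivariant locally free sheaves $F,F'$, the localized Chern character satisfies $ch^{(g,g')}(F\boxtimes F')=ch^g(F|_{M^g})\boxtimes ch^{g'}(F'|_{(M')^{g'}})$, and the equivariant Todd class of the split tangent bundle $T(M\times M')=\pi_M^*TM\oplus\pi_{M'}^*TM'$ decomposes as $Td^g(TM)\boxtimes Td^{g'}(TM')$ on the fixed locus (including the inverse Todd factors for the normal bundles of the fixed loci, which themselves split as direct sums). Combined with the K\"unneth pairing for Borel--Moore homology, which intertwines $\boxtimes$ with cap products, this yields (\ref{mult-lrr}) in the smooth case, and hence in general via the reduction above.

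The main obstacle will be the bookkeeping in the singular reduction: one must verify that $td_*(-;g)$ defined via ambient smooth embeddings is genuinely compatible with external products, i.e., that different choices of equivariant resolutions and ambient embeddings produce the same external product class in $H_*^{BM}(Z^g\times(Z')^{g'};\C)$, and that Moonen's extension of Baum--Fulton--Quart to the non-proper quasi-projective context (cf.\ \cite{M}[Ch.III, Sect.1]) respects $\boxtimes$. This is already encoded in the independence-of-choices and functoriality under closed immersions established in \cite{BFQ,M}; it then reduces to checking that those standard properties propagate through the exterior product, which is routine once the smooth-case identity is in hand.
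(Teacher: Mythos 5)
Your proof is correct and follows the same strategy as the paper: reduce to coherent sheaves by exactness/additivity, then establish the factorization in the smooth/locally free setting via external products of equivariant resolutions, localized Chern characters, and the splitting of the equivariant Todd class. The paper's proof is terser — after the reduction to coherent sheaves and an additional (cosmetic) reduction to $G = G'$ (using that $td_*(-;g)$ only depends on the cyclic subgroup generated by $g$), it simply cites Moonen [M, 7.9, p.125] for exactly the multiplicativity you derive explicitly; your argument is essentially an unfolding of that reference, and skipping the $G = G'$ reduction by working directly with $(g,g')$ on $Z\times Z'$ is fine.
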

\begin{proof}
The external product is an exact functor, thus we can assume that $\E$ and $\E'$ are coherent sheaves. 
Moreover,  we can assume that $G=G'$ since the Lefschetz-Riemann-Roch transformation only depends on the cyclic group generated by the corresponding group element. So the claim follows from \cite{M}[7.9, p.125].

\end{proof}

Back in the case of symmetric products, Lemma \ref{l101} implies that for a fixed $\GC \in D^b_{coh}(X)$ we have
\begin{equation}\label{mult_sym}
td_*(\GC^{\boxtimes{n}}; \sigma)=\prod_r \left( td_*(\GC^{\boxtimes{r}}; \sigma_r) \right)^{k_r},
\end{equation}
for $\sigma \in \Sigma_n$ of cycle-type $(k_1, k_2, \cdots)$.

\bigskip

Finally, we have the following {\it localization} property for  the Lefschetz-Riemann-Roch transformation:
\begin{lemma}\label{l33a}
Let $\sigma_r$  be an $r$-cycle. Then for any $\GC \in D^b_{\rm coh}(X)$, the following identity holds in $H^{BM}_{ev}(X;\Q)$:
\begin{equation}\label{e33a}
td_*(\GC^{\boxtimes r}; \sigma_r)=\Psi_r td_*(\GC)
\end{equation}
under the identification  $(X^r)^{\sigma_r} \simeq X$.
\end{lemma}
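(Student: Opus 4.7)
The plan is to reduce the claim to the case where $X$ is smooth, then extract the homological Adams operation from an explicit Lefschetz fixed point calculation, the key algebraic ingredient being a cyclotomic identity on Chern roots.

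By additivity of both sides of \eqref{e33a} in $[\GC]\in K_0(D^b_{\rm coh}(X))$, we may assume that $\GC$ is a single coherent sheaf. Choosing a closed embedding $\iota:X\hookrightarrow M$ into a smooth quasi-projective variety $M$, the induced embedding $\iota^r:X^r\hookrightarrow M^r$ is $\Sigma_r$-equivariant and carries the small diagonal of $X^r$ to that of $M^r$. Since the Lefschetz--Riemann--Roch transformation is functorial for proper pushforward (\cite{BFQ}, \cite{M}[Ch.\,III]) and $\Psi_r$ commutes with proper pushforward, the identity \eqref{e33a} for $\GC$ on $X^r$ follows from the analogous identity for $\iota_*\GC$ on $M^r$, reducing us to the case $X$ smooth of complex dimension $d$.

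For smooth $X$, the fixed locus $(X^r)^{\sigma_r}$ is the small diagonal $\Delta_r(X)\simeq X$, and the normal bundle of $\Delta_r(X)\hookrightarrow X^r$ decomposes under $\sigma_r$ into eigenspaces $N = \bigoplus_{k=1}^{r-1} E_k$, with each $E_k$ canonically isomorphic to $TX$ and $\sigma_r$ acting on $E_k$ by $\zeta^k$, where $\zeta := e^{2\pi i/r}$. The restriction $\GC^{\boxtimes r}|_{\Delta_r(X)}$ is $\GC^{\otimes r}$ with the cyclic-permutation action; a splitting-principle computation (orbits of length $r$ contribute $0$ because $\sum_{k=0}^{r-1}\zeta^k = 0$, so only the diagonal terms survive) gives $\mathrm{ch}^{\sigma_r}(\GC^{\otimes r}) = \psi^r\mathrm{ch}(\GC)$, where $\psi^r$ is the cohomological Adams operation. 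The explicit Lefschetz--Riemann--Roch formula (\cite{BFQ}, \cite{M}) then reads
$$td_*(\GC^{\boxtimes r};\sigma_r) = \psi^r(\mathrm{ch}(\GC))\cdot\mathrm{Td}(TX)\cdot\prod_{k=1}^{r-1}\prod_i (1-\zeta^{-k}e^{-\alpha_i})^{-1}\cap [X],$$
where $\alpha_1,\ldots,\alpha_d$ are the Chern roots of $TX$.

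The critical step is the cyclotomic identity $\prod_{k=0}^{r-1}(1-\zeta^{-k}t) = 1-t^r$, which collapses the Lefschetz contributions of the non-trivial eigenspaces together with the Todd factor into
$$\mathrm{Td}(TX)\cdot\prod_{k=1}^{r-1}\prod_i(1-\zeta^{-k}e^{-\alpha_i})^{-1} = \prod_i\frac{\alpha_i}{1-e^{-r\alpha_i}} = r^{-d}\psi^r(\mathrm{Td}(TX)),$$
the last equality because $\psi^r$ sends $\alpha_i\mapsto r\alpha_i$ on Chern roots. Hence $td_*(\GC^{\boxtimes r};\sigma_r) = r^{-d}\psi^r(\mathrm{ch}(\GC)\cdot\mathrm{Td}(TX))\cap [X]$, and since $\psi^r$ multiplies cohomology of degree $2(d-k)$ by $r^{d-k}$, the net scalar on the homological-degree-$2k$ component of $td_*(\GC)=\mathrm{ch}(\GC)\mathrm{Td}(TX)\cap[X]$ is $r^{-k}$, which is precisely the action of $\Psi_r$. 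As a by-product, the class, \emph{a priori} valued in $H^{BM}_{ev}(X;\C)$, is in fact $\Q$-rational. The main obstacle I anticipate is the reduction-to-smooth step: one needs the full equivariant functoriality of Lefschetz--Riemann--Roch for \emph{quasi-projective} varieties, as set up in \cite{M}[Ch.\,III]; once this is in place, the calculation above is essentially Moonen's original Lefschetz computation for $\OO_X$ (cf.\ \cite{M}, p.~163ff.) run verbatim for an arbitrary coherent coefficient $\GC$.
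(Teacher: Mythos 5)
Your smooth-case computation is essentially correct and is exactly the algebraic heart of Moonen's argument that the paper invokes: the eigenspace decomposition of the normal bundle of the diagonal, the identity $ch^{\sigma_r}(\GC^{\otimes r})=\psi^r\,ch(\GC)$ (orbits of length $r$ kill off by $\sum_{k}\zeta^k=0$), and the cyclotomic identity $\prod_{k=0}^{r-1}(1-\zeta^{-k}t)=1-t^r$ collapsing the $\lambda_{-1}$-denominator with the Todd factor into $\psi^r$ applied to the Todd class. The degree bookkeeping $r^{-d}\cdot r^{d-k}=r^{-k}$ turning the cohomological $\psi^r$ into the homological $\Psi_r$ is also right, and the $\Q$-rationality observation is a nice by-product.

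The reduction-to-smooth step, however, is a genuine gap, and it is not the gap you flag at the end. Functoriality of $td_*(-;g)$ and of $\Psi_r$ for the closed embedding $\iota:X=(X^r)^{\sigma_r}\hookrightarrow M=(M^r)^{\sigma_r}$ gives you
\begin{equation*}
\iota_*\bigl(td_*(\GC^{\boxtimes r};\sigma_r)\bigr) = td_*\bigl((\iota_*\GC)^{\boxtimes r};\sigma_r\bigr), \qquad
\iota_*\bigl(\Psi_r\, td_*(\GC)\bigr) = \Psi_r\, td_*(\iota_*\GC)
\end{equation*}
in $H^{BM}_{ev}(M;\Q)$. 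Thus proving the smooth-case identity for $\iota_*\GC$ on $M$ only tells you that the two sides of \eqref{e33a} agree \emph{after applying $\iota_*$}. But $\iota_*:H^{BM}_{ev}(X;\Q)\to H^{BM}_{ev}(M;\Q)$ is not injective for a closed embedding into a higher-dimensional smooth variety — for instance $\{0\}\hookrightarrow\C$ already kills $H^{BM}_0$ — so this does not yield the identity in $H^{BM}_{ev}(X;\Q)$.

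What Moonen (and the paper, which references him together with the remark immediately after the lemma) actually does is not a reduction to the smooth case but a computation carried out from the start in \emph{localized} invariants. The embedding $\iota$ and a bounded locally free resolution $\FC$ of $\iota_*\GC$ on $M$ are used to build the $\langle\sigma_r\rangle$-equivariant complex $\Delta_r^*(\FC^{\boxtimes r})$ on $M\simeq(M^r)^{\sigma_r}$, which is exact off $X$ and hence defines a class in the relative group $K^0(M,M\setminus X)\otimes\C$; this class is by definition Atiyah's $K$-theoretic Adams operation $\Psi^r([\FC])$. One then applies the \emph{localized} Chern character, valued in $H^*(M,M\setminus X;\C)\simeq H^{BM}_{ev}(X;\C)$ via Alexander duality, and your normal-bundle/cyclotomic computation runs verbatim there. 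The crucial difference is that the support on $X$ is carried through the entire calculation, so no injectivity of $\iota_*$ is ever needed. So the obstacle is not ``equivariant functoriality for quasi-projective varieties'' (which is fine) but rather that the singular-$X$ transformation must be computed with the localized Chern character rather than by pushing the smooth-case answer around.
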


\begin{proof} For $\GC=\OO_X$, this fact is proved by Moonen  (see \cite{M}[Satz 2.4, p.162]). His proof in this special case  uses an embedding $i: X \hookrightarrow M$ into a smooth complex algebraic variety $M$, together with a bounded locally free resolution $\FC$ of $i_*\GC$. Then $i^r:X^r \hookrightarrow M^r$ is a $\Sigma_r$-equivariant embedding, with $\FC^{\boxtimes r}$ a $\Sigma_r$-equivariant locally free resolution of $(i_*\GC)^{\boxtimes r} \simeq i^r_*(\GC^{\boxtimes r})$. 
Starting with  $\GC \in D^b_{\rm coh}(X)$ instead of $\OO_X$, Moonen's calculation of $td_*(\GC^{\boxtimes r}; \sigma_r)$  applies directly to this more general context.

\end{proof}

\begin{remark}\rm
Note that the homological Adams operation appearing in Lemma \ref{l33a} is induced from the $K$-theoretic Adams operation \cite{A}
$$\Psi^r: K^0(M,M\setminus X) \to K^0(M,M\setminus X) \otimes \C$$ appearing in Moonen's proof of his localization formula \cite{M}[p.164] as 
$$\Psi^r([\FC]):=[\Delta_r^*(\FC^{\boxtimes r})](\sigma_r),$$
with $\Delta_r:M \to M^r$ the diagonal embedding.
Note that the $\langle\sigma_r\rangle$-equivariant vector bundle complex $\Delta_r^*(\FC^{\boxtimes r})$
is exact off $X$, i.e., it defines a class $$[\Delta_r^*(\FC^{\boxtimes r})] \in K_{
\langle\sigma_r\rangle}^0(M,M \setminus X) \simeq K^0(M,M \setminus X) \otimes R({
\langle\sigma_r\rangle}),$$ where $R({
\langle\sigma_r\rangle})$ is the complex representation ring of the cyclic group generated by $\sigma_r$. Then $$(\sigma_r):K_{
\langle\sigma_r\rangle}^0(M,M \setminus X) \simeq K^0(M,M \setminus X) \otimes R({
\langle\sigma_r\rangle}) \to K^0(M,M \setminus X) \otimes \C$$
is induced by taking the trace homomorphism $tr(-;\sigma_r):R({
\langle\sigma_r\rangle})  \to \C$ (see \cite{M}[p.67]). 

Compare also with \cite{N}[Sect.3] for the relation between the Adams- and Lefschetz-Riemann-Roch theorems in the algebraic geometric context.
\end{remark}



\subsection{Localized Chern classes}\label{Chern}

Let $Z$ be a complex quasi-projective algebraic variety, and $G$ a finite group of algebraic automorphisms of $Z$. Let \begin{equation}
\begin{CD} c_*(-;g):K_0(\cs^G(Z)) @>{tr(-|_{Z^g};g)}>> F(Z^g) \otimes \Q @>{c_* \otimes \Q}>> H^{BM}_{ev}(Z^g;\Q)\end{CD}
\end{equation}
be Sch\"urmann's localized Chern class transformation of \cite{Sch10}[Ex.1.3.2], with $\cs^G(-)$ the abelian category of  algebraically constructible $G$-equivariant sheaves of $\Q$-vector spaces and $F(-)$ the abelian group of $\Z$-valued algebraically constructible functions. Here $c_*$ denotes the Chern class transformation of MacPherson \cite{MP}, and ${tr(-|_{Z^g};g)}$ is the group homomorphism defined by $$[\FC] \mapsto \Big(x \mapsto tr(\FC_x;g)\Big)\in F(Z^g).$$ Note that for $x \in Z^g$, $g$ acts on the finite-dimensional stalk $\FC_x$ for a constructible $G$-equivariant sheaf $\FC$. 

For  the trivial group element $id \in G$, $c_*(\FC):=c_*([\FC];id)$ defines the MacPherson-Chern class of a constructible sheaf $\FC$ as the MacPherson Chern class of the constructible function given by the stalkwise Euler characteristic.

For a complex $\FC \in D^b_c(Z)$ which is weakly $G$-equivariant in the sense of \cite{CMSS1}, its cohomology sheaves are $G$-equivariant constructible sheaves, so that we can define its class in the Grothendieck  group $K_0(\cs^G(Z))$ as $$[\FC]:=\sum\nolimits_{i} (-1)^i \cdot [\HC^i(\FC)].$$
Let $\pi:Z \to Z/G$ be the quotient map. Then $\pi_*\FC \in D^b_c(Z/G)$ is a weakly  $G$-equivariant  complex on $Z/G$. We can define its $G$-invariant part $(\pi_*\FC)^G$ using the corresponding projector as in \cite{CMSS1}[p.20] since $ D^b_c(Z)$ is a $\Q$-linear additive Karoubian category. Similarly, the class $[(\pi_*\FC)^G]\in K_0(\cs(Z/G))$ in the  Grothendieck group is defined as above.
With the above notations, we now have the following
\begin{lemma}[Averaging property]\label{l100c}
\begin{equation}\label{800c} c_*((\pi_*\FC)^G) =\frac{1}{|G|}\sum_{g \in G} \pi^g_*c_*(\FC;g),\end{equation} with $\pi^g:Z^g \to Z/G$ the induced map.
\end{lemma}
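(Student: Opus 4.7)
The plan is to mimic the strategy of Lemma \ref{l100} for the Lefschetz--Riemann--Roch transformation, but carrying out the key pointwise computation at the level of constructible functions rather than coherent sheaves. Since $\pi_*$ is exact on constructible sheaves and the $G$-invariant projector $(-)^G$ is an exact functor on the Karoubian $\Q$-linear category $\cs^G(Z)$, both sides of (\ref{800c}) are additive in short exact sequences of weakly $G$-equivariant complexes $\FC$. Hence, arguing term-by-term in the cohomology sheaves, it suffices to prove the identity when $\FC$ is a single $G$-equivariant constructible sheaf.

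Next, I would factor the localized Chern class through constructible functions. Because $c_* \otimes \Q$ is MacPherson's transformation (functorial for proper push-forwards), and both $\pi:Z \to Z/G$ and $\pi^g:Z^g \to Z/G$ are finite (hence proper) morphisms, the desired identity in $H^{BM}_{ev}(Z/G;\Q)$ will follow from the corresponding identity in the group $F(Z/G)\otimes\Q$ of constructible functions, namely
\begin{equation}\label{ccf}
\chi_{\rm stalk}\bigl((\pi_*\FC)^G\bigr) \;=\; \frac{1}{|G|}\sum_{g \in G} \pi^g_*\bigl(tr(\FC|_{Z^g};g)\bigr),
\end{equation}
where $\chi_{\rm stalk}$ denotes the stalkwise Euler characteristic, and the push-forward of constructible functions on the right is the one compatible with $c_*$ under MacPherson's theorem.

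The remaining step is then to verify (\ref{ccf}) pointwise. Fix $y\in Z/G$ and let $\pi^{-1}(y)=\{x_1,\dots,x_k\}$ be the corresponding $G$-orbit, with $H_i\leq G$ the stabilizer of $x_i$. Since $\pi$ is finite, the stalk $(\pi_*\FC)_y$ is the direct sum $\bigoplus_i \FC_{x_i}$, on which $G$ acts compatibly with its permutation of the $x_i$ and the actions of the $H_i$ on the individual stalks. A standard check identifies $\bigl((\pi_*\FC)_y\bigr)^G \cong \FC_{x_1}^{H_1}$, so the left-hand side of (\ref{ccf}) at $y$ equals
\[
\chi\bigl(\FC_{x_1}^{H_1}\bigr) \;=\; \frac{1}{|H_1|}\sum_{h\in H_1} tr(h;\FC_{x_1}),
\]
using the classical character formula for $G$-invariants (applied degreewise to the graded stalk). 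The right-hand side at $y$ is
\[
\frac{1}{|G|}\sum_{g\in G}\;\sum_{\substack{x\in\pi^{-1}(y)\\ gx=x}} tr(g;\FC_x) \;=\; \frac{1}{|G|}\sum_{i=1}^k \sum_{h\in H_i} tr(h;\FC_{x_i}),
\]
and since the $H_i$ are conjugate under $G$-equivariance of $\FC$, each inner sum contributes $|H_1|\cdot \chi(\FC_{x_1}^{H_1})$, so the total equals $\frac{k\cdot |H_1|}{|G|}\chi(\FC_{x_1}^{H_1})=\chi(\FC_{x_1}^{H_1})$, matching the left-hand side.

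The main obstacle, as in the coherent case of Lemma \ref{l100}, is bookkeeping rather than conceptual depth: one must confirm that $\cs^G(Z)$ is Karoubian so that the projector $(-)^G$ is well-defined and exact, and that MacPherson's functorial push-forward under the finite maps $\pi$ and $\pi^g$ translates the stalkwise character identity (\ref{ccf}) into the claimed equality in Borel--Moore homology. Both points are already implicit in \cite{CMSS1} and \cite{Sch10}, so combining them yields the lemma.
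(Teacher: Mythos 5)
Your proposal is correct and follows essentially the same route as the paper's own proof: reduce by exactness to a single $G$-equivariant sheaf, factor through the constructible-function identity, and then check it pointwise at $y\in Z/G$ using the decomposition of $(\pi_*\FC)_y$ as an induced representation together with the classical averaging formula $\chi(V^H)=\tfrac{1}{|H|}\sum_{h\in H}tr(h;V)$. The only cosmetic difference is that you spell out the orbit--stabilizer bookkeeping over all points of the fiber, whereas the paper collapses the right-hand side directly to $\tfrac{|G/G_x|}{|G|}\sum_{g\in G_x}tr(\FC_x;g)$, but the computations are the same.
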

\begin{proof} Since $\pi_*$ and $(-)^G$ are exact functors, it suffices to prove formula (\ref{800c}) for a $G$-equivariant constructible sheaf. By functoriality of $c_*(-;g)$, this case follows from the constructible function identity:
\begin{equation}\label{810c}tr((\pi_*\FC)^G;id) =\frac{1}{|G|}\sum_{g \in G} \pi^g_* tr(\FC;g).\end{equation}
Pointwise, at $\pi(x) \in Z/G$, the left-hand side of the above equality reduces to the Euler characteristic of the following vector space:
$$((\pi_*\FC)_{\pi(x)})^G \simeq ( {\rm Ind}^G_{G_x} \FC_x)^G \simeq (\FC_x)^{G_x}, $$
with $G_x$ the stabilizer of $x \in Z$. The right-hand side of (\ref{810c}) can be calculated by the $G$-equivariance of $\FC$ as:
\begin{align*} \Big(\frac{1}{|G|}\sum_{g \in G} \pi^g_* tr(\FC;g)\Big)(\pi(x)) &= \frac{1}{|G|} \sum_{x',  \pi(x')=\pi(x)} \Big(\sum_{g \in G_{x'}} tr(\FC_{x'};g)\Big) \\
&= \frac{|G/{G_x}|}{|G|}  \Big(\sum_{g \in G_x}  tr(\FC_{x};g)\Big). \end{align*}
So the identity (\ref{810c}) reduces pointwise to the well-known identity for the rational $G_x$-representation $\FC_x$:
$$\chi(\FC_x^{G_x})= \frac{1}{|G_x|}\sum_{g \in G_x} tr(\FC_x;g).$$
\end{proof}

Let now $X$ be a quasi-projective variety with $\FC \in D^b_c(X)$ given. 
The $n$-th symmetric power $\FC^{(n)}$ of $\FC$ is defined as before by 
$$\FC^{(n)}:=({\pi_n}_* \FC^{\boxtimes})^{\Sigma_n} \in D^b_c(\Xs).$$
The needed {\it averaging} property for the localized Chern class transformation, i.e.,  
\begin{equation}\label{700c} c_*(\FC^{(n)}) =\frac{1}{n!}\sum_{\sigma \in \Sigma_n} \pi^{\sigma}_*c_*(\FC^{\boxtimes{n}}; \sigma)\end{equation}
follows now from Lemma \ref{l100c}.

\bigskip

Similarly, the {\it multiplicativity} property 
\begin{equation}\label{mult_c}
c_*(\FC^{\boxtimes{n}}; \sigma)=\prod_r \left( c_*(\FC^{\boxtimes{r}}; \sigma_r) \right)^{k_r}.
\end{equation}
for $\sigma \in \Sigma_n$ of cycle-type $(k_1, k_2, \cdots)$ follows from the following 
\begin{lemma}[Multiplicativity]\label{l101c}
Let $Z$, $Z'$ be complex quasi-projective algebraic varieties, and $G$ and $G'$ finite groups of algebraic automorphisms of $Z$ and resp. $Z'$. Then for $\FC \in D^b_c(Z)$ a weakly $G$-equivariant complex and $\FC' \in D^b_c(Z')$ a weakly $G'$-equivariant complex, one has for $g \in G$ and $g' \in G'$ the following identity:
\begin{equation}\label{mult-lrr}
c_*(\FC \boxtimes \FC';(g,g'))=c_*(\FC;g) \boxtimes c_*(\FC';g').
\end{equation}
\end{lemma}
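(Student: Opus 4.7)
The plan is to factor the transformation $c_*(-;g)$ exactly as in its definition and verify multiplicativity of each of the two constituent pieces separately. Since the external product $\boxtimes$ is exact and the Grothendieck classes involved are alternating sums of cohomology sheaves, I would first reduce to the case where $\FC$ is a weakly $G$-equivariant constructible sheaf and $\FC'$ is a weakly $G'$-equivariant constructible sheaf (rather than bounded complexes). Observe that the fixed-point locus of $(g,g')$ on $Z\times Z'$ is exactly $Z^g \times Z'^{g'}$, so that both sides of the proposed identity live naturally in $H^{BM}_{ev}(Z^g \times Z'^{g'};\Q)$.

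The first step is multiplicativity of the localized trace map, and this is essentially a pointwise calculation. At a point $(x,x') \in Z^g \times Z'^{g'}$, the stalk $(\FC \boxtimes \FC')_{(x,x')}$ is canonically identified with $\FC_x \otimes_\Q \FC'_{x'}$, with $(g,g')$ acting as $g \otimes g'$. Since trace is multiplicative under tensor product of finite-dimensional representations, one gets
\[ \mathrm{tr}\bigl((\FC\boxtimes\FC')_{(x,x')};(g,g')\bigr) = \mathrm{tr}(\FC_x;g)\cdot \mathrm{tr}(\FC'_{x'};g'). \]
Packaged as an identity of constructible functions on $Z^g\times Z'^{g'}$, this reads
\[ tr\bigl((\FC\boxtimes\FC')|_{(Z\times Z')^{(g,g')}};(g,g')\bigr) \;=\; tr(\FC|_{Z^g};g) \,\boxtimes\, tr(\FC'|_{Z'^{g'}};g') \]
in $F(Z^g\times Z'^{g'})\otimes\Q$.

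The second step—and the only nontrivial input—is that MacPherson's Chern class transformation $c_*$ is itself compatible with external products of constructible functions, i.e., $c_*(\alpha \boxtimes \beta) = c_*(\alpha) \boxtimes c_*(\beta)$. In the projective case this is a theorem of Kwieci\'nski, and in the quasi-projective setting it follows from Sch\"urmann's Verdier-Riemann-Roch framework for $c_*$, which shows compatibility of $c_*$ with smooth pullback and, consequently, with cross-products. Combining the two steps gives
\[ c_*(\FC\boxtimes\FC';(g,g')) = c_*\bigl(tr(\FC|_{Z^g};g)\boxtimes tr(\FC'|_{Z'^{g'}};g')\bigr) = c_*(\FC;g)\boxtimes c_*(\FC';g'), \]
which is the claim. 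The main obstacle is simply the citation for the cross-product compatibility of the MacPherson transformation in the quasi-projective setting; once that is invoked, everything else is a formal verification entirely parallel to the argument used for the Lefschetz--Riemann--Roch transformation in Lemma \ref{l101}.
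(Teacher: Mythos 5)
Your proof is correct and takes essentially the same approach as the paper: reduce to constructible sheaves by exactness of $\boxtimes$, verify multiplicativity of the localized trace map stalkwise, and then invoke compatibility of the MacPherson--Chern class transformation with cross-products (the paper cites \cite{BSY} for this last step where you cite Kwieci\'nski/Sch\"urmann, and the paper additionally mentions an innocuous reduction to $G=G'$, but the ingredients and their order are the same).
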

\begin{proof}
The external product is an exact functor, thus we can assume that $\FC$ and $\FC'$ are constructible sheaves. 
Moreover,  we can assume that $G=G'$ since the localized Chern class  transformation only depends on the cyclic group generated by the corresponding group element. So the claim follows by the multiplicativity of the Chern class transformation (see \cite{BSY}) and multiplicativity of traces.

\end{proof}

\bigskip

Finally, we have the following {\it localization} property for  the localized Chern class transformation:
\begin{lemma}\label{l200c}
Let $\sigma_r$  be an $r$-cycle. Then for any $\FC \in D^b_c(X)$, the following identity holds in $H^{BM}_{ev}(X;\Q)$:
\begin{equation}\label{e33a}
c_*(\FC^{\boxtimes r}; \sigma_r)= c_*(\FC)
\end{equation}
under the identification  $(X^r)^{\sigma_r} \simeq X$.
\end{lemma}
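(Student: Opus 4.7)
The plan is to reduce the assertion to a pointwise constructible-function identity on the diagonal and then invoke MacPherson's Chern class transformation, in direct analogy with the Todd-class localization of Lemma \ref{l33a}, but without any Adams-operation twist on the right-hand side. The absence of such a twist ultimately reflects the fact that at the level of constructible functions (i.e., stalkwise Euler characteristics) the homological Adams operations act trivially.

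By the very definition of $c_*(-;\sigma_r)$ as the composition $(c_* \otimes \Q) \circ tr(-|_{(X^r)^{\sigma_r}};\sigma_r)$, and since $c_*(\FC) = (c_* \otimes \Q)\bigl(x \mapsto \chi(\FC_x)\bigr)$ under the identification $(X^r)^{\sigma_r} \simeq \Delta_r(X) \simeq X$, it is enough to verify the constructible-function identity
\begin{equation*}
tr\bigl((\FC^{\boxtimes r})_{(x,\ldots,x)};\sigma_r\bigr) = \chi(\FC_x), \quad x \in X.
\end{equation*}

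The pointwise claim is purely linear-algebraic: the stalk at a diagonal point is the tensor power $\FC_x^{\otimes r}$ of complexes of $\Q$-vector spaces, with $\sigma_r$ acting by cyclic permutation carrying Koszul signs. Thus the identity reduces to showing that for any bounded complex $V^{\bullet}$ of finite-dimensional $\Q$-vector spaces,
\begin{equation*}
\sum_i (-1)^i\, tr\bigl(\sigma_r \mid \HC^i((V^{\bullet})^{\otimes r})\bigr) = \chi(V^{\bullet}).
\end{equation*}
By K\"unneth, only the diagonal summands $V^{j}\otimes\cdots\otimes V^{j}$ contribute to the trace, as $\sigma_r$ cyclically permutes any off-diagonal K\"unneth factor without fixed vectors. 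A short Koszul-sign book-keeping (combining the sign $(-1)^{r-1}$ of the $r$-cycle, the resulting Koszul contribution $(-1)^{j^2(r-1)}$ on $(V^j)^{\otimes r}$, and the outer grading sign $(-1)^{rj}$) collapses the double sum to $\sum_j (-1)^j \dim V^j = \chi(V^{\bullet})$, independently of the parity of $r$.

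The main obstacle is the Koszul-sign verification, but the remaining ingredients -- additivity of the trace in distinguished triangles (to pass from complexes to cohomology), K\"unneth at the stalk level, and the functoriality of $c_* \otimes \Q$ -- are entirely standard. Applying $c_* \otimes \Q$ to both sides of the constructible-function identity yields the lemma.
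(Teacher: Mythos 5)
Your proposal reduces the claim to the same pointwise statement that the paper uses -- namely that for a bounded complex $V^{\bullet}$ of finite-dimensional $\Q$-vector spaces one has $\sum_i (-1)^i\,tr(\sigma_r \mid \HC^i((V^{\bullet})^{\otimes r})) = \chi(V^{\bullet})$ -- but resolves it differently. The paper finishes by citing Atiyah's K-theoretic definition of the Adams operation, which gives $tr(\FC^{\boxtimes r};\sigma_r)=\chi(\Psi^r([\FC]))$, and then uses that $\chi\colon K_0(\vct)\to\Z$ is a $\lambda$-ring isomorphism (so $\Psi^r$ is the identity) to conclude. You instead carry out the Lefschetz-number/trace computation directly via K\"unneth and Koszul signs. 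This buys you a self-contained, elementary argument that makes the triviality of the Adams twist transparent, at the cost of delicate sign bookkeeping; the paper's route is shorter but offloads the work to Atiyah and to standard $\lambda$-ring facts. Both are correct and close in spirit; yours is a genuinely more explicit variant of the final step.

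One caution on the sign accounting: the contribution $(-1)^{r-1}$ (the ordinary sign of the $r$-cycle) must \emph{not} be multiplied in as a separate factor. In the weak $\Sigma_r$-action used here (and in \cite{MSS}), the only sign is the Koszul sign, which on the diagonal summand $(V^j)^{\otimes r}$ is already $(-1)^{j^2(r-1)}$. Combining this with the grading sign $(-1)^{rj}$ gives $(-1)^{rj+j(r-1)} = (-1)^{2rj-j} = (-1)^j$, as needed, and this is indeed parity-independent in $r$. If one literally multiplied by an additional $(-1)^{r-1}$, the total would be $(-1)^{r-1-j}$, which fails for even $r$ -- so it is important that your phrase ``the sign $(-1)^{r-1}$ of the $r$-cycle'' is read only as an explanation of where the Koszul exponent $(r-1)$ comes from, not as an extra factor. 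With this reading your computation is correct and the rest of the proof (only diagonal K\"unneth summands survive the trace, additivity over distinguished triangles, functoriality of $c_*\otimes\Q$) is sound.
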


\begin{proof} It suffices to show the constructible function identity:
$$tr(\FC^{\boxtimes r}; \sigma_r)= tr(\FC;id).$$
This can be checked pointwise, so we can assume that $X$ is a point space.
Choosing a free resolution, we can moreover assume that $\FC$ is a bounded complex of finite dimensional rational vector spaces. By Atiyah's definition of the Adams operation (e.g., see \cite{A}[Sect.2] and \cite{MS09}[Sect.3]),  one gets $$tr(\FC^{\boxtimes r}; \sigma_r)=\chi(\Psi^r([\FC])),$$ with $\Psi^r$ the $r$-th Adams operation acting on the Grothendieck group $K_0(\vct)$ of finite dimensional rational vector spaces. But  the Euler characteristic $\chi: K_0(\vct) \to \Z$ is a $\lambda$-ring isomorphism, hence $\Psi^r$ acts in fact trivially on $K_0(\vct)$. This yields
$$tr(\FC^{\boxtimes r}; \sigma_r)= \chi(\FC).$$

\end{proof}

Altogether, this finishes the proof of our Theorem \ref{t2}.


\section{Localization of Hirzebruch classes}\label{S3}
The aim of this section is to supply a proof of the technical Lemma \ref{l3}. We begin by recalling in \S \ref{S31} the construction of the Atiyah--Singer class transformation from \cite{CMSS1}. In \S \ref{S32}, we specialize to the case of symmetric products 
and study how Saito's functors $gr^F_*DR$ behave with respect to external powers. Then we finish 
the proof of Lemma \ref{l3}.
\subsection{The Atiyah--Singer class transformation}\label{S31}
Let $Z$ be a (possibly singular) quasi-projective variety acted upon by a finite group $G$ of algebraic automorphisms.
The Atiyah--Singer class transformation $${T_y}_*(-; g):K_0(\mh^G(Z)) \to H^{BM}_{ev}(Z^g) \otimes \C[y^{\pm 1}]$$  is constructed in \cite{CMSS1} in two stages. First, by using  Saito's theory of algebraic mixed Hodge modules  \cite{Sa}, we construct an equivariant version of the motivic Chern class transformation of \cite{BSY} (see also \cite{Sch-MSRI, Yokura-MSRI}), i.e., the {\em equivariant motivic Chern class transformation}:
\begin{equation}
\mc^G:K_0(\mh^G(Z)) \to K_0({\rm Coh}^G(Z)) \otimes \Z[y^{\pm 1}],
\end{equation}
for $K_0({\rm Coh}^G(Z))$ the Grothendieck group of $G$-equivariant algebraic coherent sheaves on $Z$.
Secondly, we employ  the Lefschetz--Riemann--Roch transformation of Baum--Fulton--Quart \cite{BFQ} and Moonen \cite{M}:
\begin{equation}
td_*(-; g):K_0(\co^G(Z)) \to H^{BM}_{ev}(Z^g;\C)
\end{equation}
to obtain (localized) homology classes on the fixed-point set $Z^g$. 

\bigskip

In order to define the equivariant motivic Chern class transformation $\mc^G$, we work in the category $D^{b,G} \mh(Z)$ of $G$-equivariant objects in the derived category $D^b\mh(Z)$ of algebraic mixed Hodge modules on $Z$, and similarly for $D_{\rm coh}^{b,G}(Z)$,  the category of $G$-equivariant objects in the derived category $D_{\rm coh}^{b}(Z)$ of bounded complexes of $\OO_Z$-sheaves with coherent cohomology. Let us recall that in both cases,  a $G$-equivariant element $\MC$ is just an element in the underlying additive category (e.g., $D^b\mh(Z)$), with a $G$-action given by isomorphisms
$$\psi_g: \MC \to g_* \MC \quad (g\in G),$$  
such that $\psi_{id}=id$ and $\psi_{gh}=g_*(\psi_h)\circ \psi_g$ for all $g,h \in G$ (see 
\cite{MS09}[Appendix]). These ``weak equivariant derived categories'' $D^{b,G}(-)$ are not triangulated in general. Nevertheless, one can define a suitable Grothendieck group, by using ``equivariant distinguished triangles'' in the underlying derived category $D^b(-)$, and get  isomorphisms (cf. \cite{CMSS1}[Lemma 6.7]): 
$$ K_0(D^{b,G} \mh(Z))=K_0(\mh^G(Z)) \quad \text{and} \quad K_0(D_{\rm coh}^{b,G}(Z))=
K_0({\rm Coh}^G(Z)).$$
The equivariant motivic Chern class transformation $\mc^G$ is defined by noting that Saito's natural transformations of triangulated categories (cf. \cite{Sa1}[Sect.2.2], \cite{Sa}[proof of Prop.2.33], \cite{Sa3}[(1.3.4)]) 
$${\rm gr}^F_p DR:D^b\mh(Z) \to D^b_{\rm coh}(Z)$$ commute with the push-forward $g_*$ induced by each $g \in G$, thus inducing equivariant transformations (cf. \cite{CMSS1}[Example 6.6]) $${\rm gr}^F_p DR^G:D^{b,G}\mh(Z) \to D^{b,G}_{\rm coh}(Z).$$ Note that for a fixed $\MC \in D^{b,G} \mh(Z)$, one has that ${\rm gr}^F_p DR^G(\MC)=0$ for all but finitely many $p \in \Z$.  This yields the following definition (cf. \cite{CMSS1}):
\begin{defn} The $G$-equivariant motivic Chern class transformation 
$$\mc^G:K_0(\mh^G(Z)) \to K_0(D_{\rm coh}^{b,G}(Z)) \otimes \Z[y^{\pm 1}] = 
K_0({\rm Coh}^G(Z)) \otimes \Z[y^{\pm 1}]$$
is defined by:
\begin{equation}\label{d}
\mc^G([\MC]):= \sum_{p} \left[{\rm gr}^F_{-p} DR^G (\MC) \right] \cdot (-y)^p=  \sum_{i,p}(-1)^i \left[\HC^i({\rm gr}^F_{-p} DR^G (\MC))\right] \cdot (-y)^p .
\end{equation} 
The Atiyah--Singer class transformation is defined by the composition
\begin{equation}
{T_y}_*(-; g):=td_*(-; g) \circ \mc^G,
\end{equation}
with 
\begin{equation}
td_*(-; g):K_0(\co^G(Z)) \to H^{BM}_{ev}(Z^g;\C)
\end{equation}
the Lefschetz--Riemann--Roch transformation (extended linearly over $ \Z[y^{\pm 1}]$).
\end{defn}


\subsection{Atiyah-Singer classes for external products}\label{S32}
In this section we finish the proof of Lemma \ref{l3}, after first developing the needed prerequisites.

\begin{lemma}\label{l32a} Let $X$ be a complex quasi-projective variety and fix $\MC \in D^b\mh(X)$. Then there is a $\Sigma_r$-equivariant isomorphism of bounded graded objects in $D^{b}_{\rm coh}(X^r)$:
\begin{equation}\label{gr}
gr^F_*DR(\MC^{\boxtimes r}) \simeq \left( gr^F_*DR(\MC) \right)^{\boxtimes r}, 
\end{equation}
where the left-hand side underlies the weakly equivariant complex $gr^F_*DR^{\Sigma_r}(\MC^{\boxtimes r})$, and the $\Sigma_r$-action on the right-hand side is the usual action on external products of graded complexes.
\end{lemma}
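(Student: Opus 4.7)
\medskip

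\noindent\textbf{Proof proposal.} The plan is to deduce this isomorphism from the Künneth-type compatibility of Saito's functor $gr^F_*DR$ with external products, and then carefully verify that the resulting isomorphism intertwines the $\Sigma_r$-actions. The starting point is the fact, due to Saito \cite{Sa1,Sa3}, that for any two complexes $\MC_1 \in D^b\mh(X_1)$ and $\MC_2 \in D^b\mh(X_2)$ there is a natural isomorphism
\begin{equation*}
gr^F_p DR(\MC_1 \boxtimes \MC_2) \;\simeq\; \bigoplus_{p_1+p_2=p} gr^F_{p_1} DR(\MC_1) \boxtimes gr^F_{p_2} DR(\MC_2)
\end{equation*}
in $D^b_{\rm coh}(X_1 \times X_2)$, functorial in both variables and compatible with associativity.

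Iterating this Künneth isomorphism $r-1$ times yields, non-equivariantly, an isomorphism
\begin{equation*}
gr^F_* DR(\MC^{\boxtimes r}) \;\simeq\; \left(gr^F_* DR(\MC)\right)^{\boxtimes r}
\end{equation*}
of bounded graded objects in $D^b_{\rm coh}(X^r)$. To upgrade this to a $\Sigma_r$-equivariant statement, I would reduce to checking compatibility with the generating transpositions $\tau_i=(i,i+1)$. By naturality of Saito's Künneth isomorphism applied to the swap $\MC \boxtimes \MC \overset{\sim}{\to} \MC \boxtimes \MC$ in $D^b\mh(X^2)$ (as defined in \cite{MSS}), the induced automorphism on the right-hand side is precisely the swap of the two graded external factors, with the Koszul signs that appear in the symmetry constraint for external products of graded complexes. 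Since by construction of \cite{MSS} the $\Sigma_r$-action on $\MC^{\boxtimes r}$ is built from these elementary swaps with the same sign conventions that are forced on the underlying $\Q$-complexes, the isomorphism obtained by iteration is automatically $\Sigma_r$-equivariant.

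The main obstacle I expect is the careful bookkeeping of signs: one must confirm that Saito's Künneth isomorphism is compatible with the braiding (symmetric monoidal structure) on $D^b\mh$, and that this braiding, under $gr^F_*DR$, goes to the standard Koszul braiding on $D^b_{\rm coh}$ used in \cite{MSS} to define the $\Sigma_r$-action on external products of graded complexes. Once this pentagon/hexagon-type compatibility is recorded (which is essentially contained in \cite{Sa1}[Sect.2.2] together with the formalism of \cite{MSS}[Sect.1]), the equivariance follows by a formal induction on $r$, reducing to the case of a single transposition acting on a two-fold external product. This then gives the claimed $\Sigma_r$-equivariant isomorphism (\ref{gr}), which is exactly what is needed to feed into the proof of Lemma \ref{l3} by applying $td_*(-;\sigma_r)$ term by term.
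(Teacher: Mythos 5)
Your overall approach is in the same spirit as the paper's, but the paper's proof differs in several concrete ways that matter, and your plan glosses over exactly the point where the work is. The paper does not start from a pre-existing Künneth isomorphism for $gr^F_*DR$ at the level of derived categories and then upgrade it to a $\Sigma_r$-equivariant statement by inducting on transpositions. Instead, it embeds $X$ into a smooth variety $M$, identifies $D^b\mh(X)$ with $D^b\mh_X(M)$, and works with Saito's filtered de Rham functor as a functor of \emph{complexes} of filtered $\DC_M$-modules, $DR: C^b\mh(M) \to C^bF(\OO_M, {\rm Diff})$. At that strict complex level, \cite{MSS}[Remark 1.6] already provides a single canonical map ${\rm can}: DR(\MC^{\boxtimes r}) \to DR(\MC)^{\boxtimes r}$ that commutes with the $\Sigma_r$-actions \emph{by construction} (recall that the $\Sigma_r$-action on $\MC^{\boxtimes r}$ is itself defined in \cite{MSS} precisely so as to be compatible with the usual Koszul action on the underlying complexes). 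So the equivariance is built in once and for all, not verified a posteriori through binary swaps. The remaining work in the paper is then (i) showing that ${\rm gr(can)}$ is a graded quasi-isomorphism — done locally using a suitable ``locally free'' resolution as in \cite{Sa1}[Lemma 2.1.17] — and (ii) showing the resulting isomorphism is independent of the embedding, via the multiple K\"unneth formula of \cite{MSS}[Sect.1.11].

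The genuine risk in your plan is twofold. First, the starting ``Künneth isomorphism, due to Saito'' is not simply available as a citation: establishing it, together with its functoriality and associativity, is essentially the content of this lemma, and the paper builds it from a specific strict-level map rather than quoting it. Second, your route requires that a braiding on $D^b\mh$ exists and is carried by $gr^F_*DR$ to the Koszul braiding on $D^b_{\rm coh}$; $D^b\mh$ is not presented as a symmetric monoidal triangulated category, and the $\Sigma_r$-action of \cite{MSS} is constructed at the complex level precisely because the derived-category formalism alone does not produce it. Your instinct that ``sign bookkeeping'' is the danger is correct, but the fix is not to check hexagon identities — it is to work, as the paper does, with an explicit equivariant map over an ambient smooth variety, where the issue does not arise.
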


\begin{proof}
Since $X$ is quasi-projective, we can assume $X$ is embedded in a smooth complex algebraic variety $M$. We have $D^b\mh(X) \simeq D^b\mh_X(M)$, by using the identification of the category $\mh(X)$ of mixed Hodge modules on $X$ with the category  $\mh_X(M)$ of mixed Hodge modules on $M$ supported on $X$ (\cite{Sa}[Sect.4]). In this case, Saito's functor $gr^F_*DR$ is induced from the  graded transformation associated to a filtered de Rham functor
$$DR: D^b \mh_X(M) \to D^b_{{\rm coh}}F(\OO_M, {\rm Diff})$$ taking values in Saito's category of bounded filtered differential complexes on $M$ whose graded pieces are bounded complexes of  $\OO_M$-modules with coherent cohomology sheaves (\cite{Sa1}[\S 2.2]). Moreover, this filtered de Rham functor is induced from a corresponding functor of complexes 
$$DR: C^b \mh(M) \to C^bF(\DC_M) \to C^bF(\OO_M, {\rm Diff}),$$
associating to a complex of mixed Hodge modules on $M$ the filtered de Rham complex of the underlying complex of filtered right $\DC_M$-modules. Note that for $\MC \in C^b \mh_X(M)$, one has by \cite{Sa1}[Lem.3.2.6] that $gr^F_pDR(\MC)\in D^b_{\rm coh}(X)$ for all $p$  (compare also with \cite{Sa}[proof of Prop.2.33]), 
with $gr^F_pDR(\MC)\simeq 0$ for all but finitely many $p \in \Z$ (see \cite{Sa4}[Lem.1.14]). 

By \cite{MSS}[Remark 1.6], for $\MC \in C^b \mh(M)$ there is a canonical map 
$${\rm can} : DR(\MC^{\boxtimes r}) \to DR(\MC)^{\boxtimes{r}}$$ commuting with the corresponding $\Sigma_r$-actions as defined in \cite{MSS}. This induces a $\Sigma_r$-equivariant map $${\rm gr(can)}: gr^F_*DR(\MC^{\boxtimes r}) \to \left( gr^F_*DR(\MC) \right)^{\boxtimes{r}}$$  of the associated graded complexes. Moreover, ${\rm gr(can)}$ is a (graded) quasi-isomorphism, as can be checked locally using a suitable ``locally free" resolution as in \cite{Sa1}[Lemma 2.1.17].

Finally, by the multiple K\"unneth formula for push-forwards of mixed Hodge modules (\cite{MSS}[Sect.1.11]), the induced $\Sigma_r$-equivariant isomorphism (\ref{gr}) does not depend on the choice of the embedding.

\end{proof}

The above result together with Lemma \ref{l33a} yield the following:
\begin{prop} Let $X$ be a complex quasi-projective variety and fix $\MC \in D^b\mh(X)$. 
Then the following identity holds for  $\sigma_r$ an $r$-cycle:
\begin{equation} \label{imp}
td_*([gr^F_pDR^{\Sigma_r}(\MC^{\boxtimes r})] ;\sigma_r)=
\begin{cases} 
\Psi_r td_*([gr^F_{q}DR(\MC)]) \ , & \text{if  $p=q \cdot r$} \ , \\
0 \ , & \text{if  $p \not\equiv 0$  mod  r}. 
\end{cases} 
\end{equation}
\end{prop}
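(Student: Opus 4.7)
The plan is to combine the equivariant graded identification of Lemma~\ref{l32a} with the Lefschetz--Riemann--Roch localization of Lemma~\ref{l33a}, after analyzing how the $r$-cycle $\sigma_r$ acts on the multi-indexed summands of the $r$-fold external power. First, I would expand the right-hand side of Lemma~\ref{l32a} as a direct sum over compositions,
$$gr^F_p DR^{\Sigma_r}(\MC^{\boxtimes r}) \;\simeq\; \bigoplus_{q_1+\cdots+q_r = p} gr^F_{q_1} DR(\MC) \boxtimes \cdots \boxtimes gr^F_{q_r} DR(\MC),$$
viewed as a weakly $\Sigma_r$-equivariant complex on $X^r$, with $\sigma_r$ simultaneously permuting the tensor factors and cyclically relabeling the multi-indices.

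Next, I would decompose this sum into orbits under the cyclic group $\langle \sigma_r \rangle$. Since $\sigma_r$ is an $r$-cycle, the length-$1$ orbits are exactly the diagonal multi-indices $(q,q,\ldots,q)$; these exist only when $r \mid p$, and for $p=rq$ the corresponding summand is $\left(gr^F_q DR(\MC)\right)^{\boxtimes r}$ with the standard cyclic $\sigma_r$-action. Lemma~\ref{l33a} then yields
$$td_*\bigl([(gr^F_q DR(\MC))^{\boxtimes r}]; \sigma_r\bigr) \;=\; \Psi_r\, td_*\bigl([gr^F_q DR(\MC)]\bigr).$$
Every orbit of length $k > 1$ contributes an equivariant summand that is induced from the proper subgroup $\langle \sigma_r^k \rangle \subsetneq \langle \sigma_r \rangle$; the trace of $\sigma_r$ on such induced objects vanishes (since $\sigma_r$ has no fixed points on the quotient $\langle \sigma_r \rangle / \langle \sigma_r^k \rangle$ when $k>1$), hence $td_*(-;\sigma_r)$ vanishes on these pieces. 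Consequently, when $r \nmid p$ every orbit has length $>1$ and the total vanishes, while for $p=rq$ only the diagonal orbit survives and gives the claimed formula.

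The main obstacle is formalizing the trace-vanishing step at the level of the Baum--Fulton--Quart/Moonen transformation rather than purely at the representation-theoretic level. This should follow along the lines of the proof of Lemma~\ref{l33a}, by choosing a $\Sigma_r$-equivariant locally free resolution on an ambient smooth variety $M^r \supset X^r$ and observing that the $K$-theoretic class of a non-diagonal external summand vanishes after applying the trace homomorphism $tr(-;\sigma_r): R(\langle \sigma_r \rangle) \to \C$---which is precisely the mechanism used implicitly in Moonen's treatment \cite{M} in the $\OO_X$ case and which extends verbatim here, since the argument only depends on the $\Sigma_r$-equivariant resolution and not on any special property of $\OO_X$.
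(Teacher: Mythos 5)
Your proposal takes essentially the same route as the paper's proof: expand $gr^F_p DR^{\Sigma_r}(\MC^{\boxtimes r})$ over multi-indices using the $\Sigma_r$-equivariant isomorphism of Lemma~\ref{l32a}, extract the diagonal contribution via Lemma~\ref{l33a}, and kill the off-diagonal summands by a trace-vanishing argument carried out on a $\Sigma_r$-equivariant locally free resolution over an ambient smooth $M^r$. One place where you are actually more careful than the paper: the paper asserts that the $\langle\sigma_r\rangle$-orbit of any non-constant multi-index $(q_1,\ldots,q_r)$ has length exactly $r$, and then computes the trace of a cyclic permutation of order $r$. That orbit-length claim can fail when $r$ is composite -- e.g.\ for $r=4$ the tuple $(1,2,1,2)$ has orbit length $2$ while $p=6 \not\equiv 0 \pmod 4$. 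Your observation that every non-constant orbit has length $k>1$, with the corresponding summand induced from the proper subgroup $\langle\sigma_r^k\rangle$, and that the trace of $\sigma_r$ on such an induced piece vanishes because $\sigma_r$ fixes no coset of $\langle\sigma_r\rangle/\langle\sigma_r^k\rangle$, repairs this and gives the same vanishing uniformly. The formalization you flag (passing from a representation-theoretic trace statement to a statement about $td_*(-;\sigma_r)$ by choosing an embedding $i:X\hookrightarrow M$ and a $\Sigma_r$-equivariant locally free resolution, then applying $tr(-;\sigma_r)$ to the relative $K$-class on $(M^r, M^r\setminus X^r)$) is exactly the mechanism used in the paper; it goes through because the argument depends only on how $\sigma_r$ permutes the external tensor factors, not on any special property of the coherent input.
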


\begin{proof}
By taking the degree $p$ part in (\ref{gr}), we have that:
\begin{equation}\label{sum} 
gr^F_pDR^{\Sigma_r}(\MC^{\boxtimes r}) = \bigoplus_{\sum_{j=1}^r q_j=p} gr^F_{q_1}DR(\MC) \boxtimes \cdots \boxtimes  gr^F_{q_r}DR(\MC), 
\end{equation}
where the action of the $r$-cycle $\sigma_r$ on the right-hand side is the (graded anti-symmetric) action by cyclic permutations of the factors in the multiple external product of complexes (as explained e.g.,  in \cite{MSS}). 

Fix a multi-index $(q_1, \cdots , q_r) \in \Z^r$, with $\sum_{j=1}^r q_j=p$. If $q_1=\cdots=q_r=q$ (with $p=q \cdot r$),  we get by Lemma \ref{l33a} that
\begin{equation} 
td_*( [gr^F_{q}DR(\MC)^{\boxtimes r}];\sigma_r)=\Psi_r td_*([gr^F_{q}DR(\MC)]).
\end{equation}
Otherwise,  the orbit of $(q_1, \cdots , q_r)$ under the permutation action of $\sigma_r$ on $\Z^r$ has length $r$. This implies:
$$td_*\Big(\Big[\bigoplus_{j=1}^r  \ \GC_{\sigma_r^j(q_1)} \boxtimes \cdots \boxtimes  \GC_{\sigma_r^j(q_r)}\Big] ;\sigma_r \Big)=0 \ ,$$ for $\GC_q:=gr^F_{q}DR(\MC)$, $q \in \Z$.
This can be seen as follows: we first choose 
an embedding $i: X \hookrightarrow M$ into a smooth complex algebraic variety $M$, together with a bounded locally free resolution $\FC_q$ of $i_*\GC_q$  ($q \in \Z$). Then $$\bigoplus_{j=1}^r  \ \FC_{\sigma_r^j(q_1)} \boxtimes \cdots \boxtimes  \FC_{\sigma_r^j(q_r)}$$ is a $\sigma_r$-equivariant locally free resolution of 
$$i^r_*\Big(\bigoplus_{j=1}^r  \ \GC_{\sigma_r^j(q_1)} \boxtimes \cdots \boxtimes  \GC_{\sigma_r^j(q_r)}\Big) \ .$$ 
Let $\Delta_r:M \to M^r$ denote the diagonal embedding, with $\Delta_r(M) \simeq (M^r)^{\sigma_r}$. Then 
\begin{equation}\label{150}\Delta_r^* \Big(  \bigoplus_{j=1}^r  \ \FC_{\sigma_r^j(q_1)} \boxtimes \cdots \boxtimes  \FC_{\sigma_r^j(q_r)} \Big)\end{equation} is a complex, whose components are direct sums of terms of the form 
$$\left(  \FC^{k_1}_{q_1} \otimes \cdots \otimes  \FC^{k_r}_{q_r} \right) \otimes \left( \oplus_{j=1}^r  \OO_M \right) ,$$ for $ \FC_q^{k}$ the $k$-th degree component of the complex $\FC_q$, and with $\sigma_r$ acting (up to suitable signs) by cyclic permutation of order $r$ on the summands in $\oplus_{j=1}^r  \OO_M$. 

By using Alexander duality $H^*(M, M \setminus X;\C) \simeq H_*^{BM}(X;\C)$, the localized Todd class $td_*\Big(\Big[\bigoplus_{j=1}^r  \ \GC_{\sigma_r^j(q_1)} \boxtimes \cdots \boxtimes  \GC_{\sigma_r^j(q_r)}\Big] ;\sigma_r \Big)$ is defined as (see \cite{M}[Defn.4.5, p.72]):
$$td^*({T}{M}) \cup  \frac{ch_X^* \left(\Big[\Delta_r^* \left(  \bigoplus_{j=1}^r  \ \FC_{\sigma_r^j(q_1)} \boxtimes \cdots \boxtimes  \FC_{\sigma_r^j(q_r)}\right)\Big](\sigma_r)\right)}  {ch^*\left(\Big[\Lambda_{-1} N_M M^{r}\Big](\sigma_r) \right)}.$$
Here we identify $X$ and $M$ with the fixed point sets  $(X^r)^{\sigma_r}$ and resp. $(M^r)^{\sigma_r}$ as before, with $N_M M^{r}$ the normal bundle of $M$ in $M^r$, and $ch_X^*$ is a suitable {\it localized} Chern character. Note that by construction the $\langle\sigma_r\rangle$-equivariant vector bundle complex of (\ref{150}) is exact off $X$, i.e., it defines a class $$\Big[\Delta_r^* \Big(  \bigoplus_{j=1}^r  \ \FC_{\sigma_r^j(q_1)} \boxtimes \cdots \boxtimes  \FC_{\sigma_r^j(q_r)} \Big)\Big] \in K_{
\langle\sigma_r\rangle}^0(M,M \setminus X) \simeq K^0(M,M \setminus X) \otimes R({
\langle\sigma_r\rangle}),$$ where $R({
\langle\sigma_r\rangle})$ is the complex representation ring of the cyclic group generated by $\sigma_r$. Then $$(\sigma_r):K_{
\langle\sigma_r\rangle}^0(M,M \setminus X) \simeq K^0(M,M \setminus X) \otimes R({
\langle\sigma_r\rangle}) \to K^0(M,M \setminus X) \otimes \C$$
is induced by taking the trace homomorphism $tr(-;\sigma_r):R({
\langle\sigma_r\rangle})  \to \C$ (see \cite{M}[p.67]).
Finally, we get $$\Big[\Delta_r^* \Big(  \bigoplus_{j=1}^r  \ \FC_{\sigma_r^j(q_1)} \boxtimes \cdots \boxtimes  \FC_{\sigma_r^j(q_r)}\Big)\Big](\sigma_r) =\Big[  \oplus  \FC^{k_1}_{q_1} \otimes \cdots \otimes  \FC^{k_r}_{q_r}  \Big] \otimes tr(\oplus_{j=1}^r  \C; \sigma_r)=0$$
since the corresponding trace of the (signed) cyclic permutation of order $r$  is zero.
 
Together with the additivity of $td_*(-;\sigma_r)$, this yields (\ref{imp}).

\end{proof}

We now have all the ingredients for proving Lemma \ref{l3}.
\begin{proof} 
{\allowdisplaybreaks
\begin{eqnarray*} {T_{(-y)}}_*(\MC^{\boxtimes{r}};\sigma_r) 
&:=&  td_*(-;\sigma_r) \circ {\rm MHC}_{-y}^{\Sigma_r} (\MC^{\boxtimes r}) \\
&:=&   td_*(-;\sigma_r) \left( \sum_p [gr^F_{-p}DR^{\Sigma_r}(\MC^{\boxtimes r})] \cdot y^p \right)\\
&=& \sum_p td_*([gr^F_{-p}DR^{\Sigma_r}(\MC^{\boxtimes r})] ;\sigma_r) \cdot y^p \\
&\overset{(\ref{imp})}{=}& \sum_q \Psi_r td_*([gr^F_{-q}DR(\MC)]) \cdot (y^r)^q\\
&=&\Psi_r td_*\left( \sum_q [gr^F_{-q}DR(\MC)] \cdot y^q  \right) \\
&=& \Psi_r \left( td_*\circ {\rm MHC}_{-y}(\MC) \right)\\
&=& \Psi_r  {T_{(-y)}}_*(\MC).
\end{eqnarray*} 
}
\end{proof}


\section{Comparison with Chern class formulae}\label{r1} In this section we first show how 
Ohmoto's generating series formula (\ref{Oh}) for the MacPherson Chern classes of symmetric products can be derived as a special case of a suitable re-normalization of our motivic Hirzebruch class formula (\ref{te100}). Secondly, we explain how formula (\ref{sh2}) for Chern classes of symmetric powers of a constructible sheaf complex underlying a complex of mixed Hodge modules can similarly be deduced from our Hirzebruch class formula of the main Theorem \ref{t1}.


\subsection{Ohmoto's Chern class formula}

We begin with a general discussion on normalized motivic Hirzebruch classes. 

The power series $Q_y(\alpha)=\frac{\alpha (1+ye^{-\alpha})}{1-e^{-\alpha}} \in \Q[y][[\alpha]]$ mentioned in the introduction is not normalized, as its zero-degree part is $1+y$, instead of $1$. So one can consider the normalized power series \begin{equation}\widehat{Q}_y(\alpha):=\frac{Q_y \left( \alpha (1+y) \right)}{1+y}=\frac{\alpha (1+y)}{1-e^{-\alpha (1+y)}}-\alpha y\end{equation} which defines the  normalized cohomology Hirzebruch class ${\widehat{T}_y}^*(-)$. 
For $y=-1$, this class specializes to the total cohomology Chern class. 

In the singular context, the corresponding normalized motivic Hirzebruch class transformation  $\widehat{T}_{y*}$ is obtained from the transformation ${T_y}_*$  of Section \ref{motHir} by a simple re-normalization procedure (e.g., see \cite{BSY}). More precisely, for a complex algebraic variety $X$ we let $\widehat{T}_{y*}$ be defined by the composition 
\begin{equation}\label{norm}\widehat{T}_{y*} : K_0(var/X)  \overset{ {T_y}_*}{\to} H_{ev}^{BM}(X) \otimes \Q[y] \overset{{\Psi_{(1+y)}} }{\to} H_{ev}^{BM}(X) \otimes \Q[y,(1+y)^{-1}],\end{equation}
with the normalization functor ${\Psi_{(1+y)}}$ given in degree $2k$ by multiplication by $(1+y)^{-k}$. The corresponding normalized motivic Hirzebruch class of the variety $X$ is defined by:
$$\widehat{T}_{y*}(X):=\widehat{T}_{y*}([id_X]).$$

It follows from \cite{BSY} that $\widehat{T}_{y*}(X) \in H_{ev}^{BM}(X) \otimes \Q[y]$. Moreover, by loc. cit., if $y=-1$ one gets that \begin{equation}\label{chern}\widehat{T}_{-1*}(X)=c_*(X) \otimes \Q \end{equation} is the rationalized homology Chern class of MacPherson \cite{MP}. 
\bigskip

For simplicity, the main results of this paper are formulated only in terms of the un-normalized Hirzebruch class transformation. However, for the purpose of comparing our formula (\ref{te100}) with Ohmoto's generating series formula (\ref{Oh}) for the MacPherson-Chern classes of symmetric products of a quasi-projective variety \cite{Oh}, we need to say a few words about the normalized version of our formula  (\ref{te100}).

By applying the normalization functor ${\Psi_{(1-y)}}$ (note that due to our indexing conventions, $y$ is replaced here by $-y$) to the left-hand side of  (\ref{te100}),  we get the generating series $\sum_{n\geq 0} \widehat{T}_{(-y)_*}(X^{(n)})$. Applying the same procedure to the right-hand side of 
 (\ref{te100}), we first note that the normalization functor ${\Psi_{(1-y)}}$ commutes with push-forward for proper maps, as well as with external products, therefore ${\Psi_{(1-y)}}$ also commutes with the Pontrjagin product and the exponential. But ${\Psi_{(1-y)}}\Psi_r{{T_{(-y)}}_*}(X)$ is not in general equal to $\widehat{T}_{-t*}(X)\vert_{t=y^r}$. Only in the case $y=1$ we get the following:
\begin{lemma}\label{l10} With the above notations, the following identification holds:
\begin{equation}\label{chern}
\lim_{y \to 1}  {\Psi_{(1-y)}}\Psi_r{{T_{(-y)}}_*}(X)=\widehat{T}_{-1*}(X)=c_*(X) \otimes \Q . 
\end{equation}
\end{lemma}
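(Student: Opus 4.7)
The plan is to work degree by degree in $H^{BM}_{2k}(X;\Q)$ and reduce the statement to the elementary identity $\lim_{y\to 1}(1-y^r)/(1-y)=r$. The starting observation, recorded in \cite{BSY} and implicit in the definition of the normalization functor $\Psi_{(1+y)}$, is that although a priori $\widehat{T}_{y*}(X)$ could have $(1+y)$ in its denominator, it actually lies in $H^{BM}_{ev}(X)\otimes\Q[y]$; equivalently, the degree $2k$ part of $T_{y*}(X)$ is divisible by $(1+y)^k$, and
$$T_{y*}(X)\big|_{2k}=(1+y)^k\,\widehat{T}_{y*}(X)\big|_{2k}.$$
Substituting $y\mapsto -y$ gives $T_{(-y)*}(X)|_{2k}=(1-y)^k\,\widehat{T}_{(-y)*}(X)|_{2k}$, a polynomial in $y$.

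Next I apply the homological Adams operation $\Psi_r$, which by Theorem \ref{t0}(b) substitutes $y\mapsto y^r$ and rescales the degree $2k$ component by $1/r^k$. This yields
$$\Psi_r T_{(-y)*}(X)\big|_{2k}=\frac{(1-y^r)^k}{r^k}\,\widehat{T}_{(-y^r)*}(X)\big|_{2k}.$$
The normalization functor $\Psi_{(1-y)}$ then multiplies the degree $2k$ piece by $(1-y)^{-k}$, producing
$$\Psi_{(1-y)}\Psi_r T_{(-y)*}(X)\big|_{2k}=\left(\frac{1-y^r}{r(1-y)}\right)^k\widehat{T}_{(-y^r)*}(X)\big|_{2k}.$$

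Finally I pass to the limit $y\to 1$. Using $1-y^r=(1-y)(1+y+\cdots+y^{r-1})$, the scalar $(1-y^r)/(r(1-y))$ specializes to $1$, while $\widehat{T}_{(-y^r)*}(X)|_{2k}$ specializes continuously to $\widehat{T}_{-1*}(X)|_{2k}$. Summing over $k$ identifies the limit with $\widehat{T}_{-1*}(X)$, which equals $c_*(X)\otimes\Q$ by the identification (\ref{chern}) recalled from \cite{BSY}. The only delicate point is ensuring that the limit is well-defined term-by-term, with no $(1-y)^{-1}$ factor surviving in any degree; this is exactly what the divisibility of $T_{(-y)*}(X)|_{2k}$ by $(1-y)^k$ guarantees. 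I do not expect any genuine obstacle here: the lemma amounts to tracking the normalization factor against the rescaling built into $\Psi_r$, with the substantive input $\widehat{T}_{-1*}(X)=c_*(X)\otimes\Q$ borrowed from \cite{BSY}.
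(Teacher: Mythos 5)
Your proof is correct, and it takes a genuinely different route from the paper's. The paper does not attack Lemma \ref{l10} directly: it first establishes the stronger transformation-level identity of Lemma \ref{l11}, $\lim_{y\to 1}\Psi_{(1-y)}\Psi_r T_{(-y)*}(-)=\widehat{T}_{-1*}(-)$ on all of $K_0(var/X)$, by reducing (via functoriality and generators) to the case $X$ smooth, writing $T_{(-y)*}(X)$ in terms of Chern roots of $TX$, and evaluating the limit with l'H\^opital's rule. Lemma \ref{l10} is then the evaluation at $[id_X]$. You instead work degree by degree and exploit the polynomiality fact from \cite{BSY} — that $\widehat{T}_{y*}(X)$ lies in $H^{BM}_{ev}(X)\otimes\Q[y]$, i.e.\ the degree-$2k$ piece of $T_{y*}(X)$ is divisible by $(1+y)^k$ — together with the elementary limit $\lim_{y\to 1}(1-y^r)/(r(1-y))=1$. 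This is cleaner: it avoids the smooth reduction and the l'H\^opital computation entirely. The trade-off is that your argument only yields the single evaluation of Lemma \ref{l10} rather than the transformation-level Lemma \ref{l11} (though one could recover \ref{l11} from your method by pushing the polynomiality down from generators $[M\to X]$ with $M$ smooth and proper), and it leans more heavily on the polynomiality input from \cite{BSY} as a black box, whereas the paper's computation essentially re-derives that divisibility in the smooth case along the way. Both routes invoke \cite{BSY} for the identification $\widehat{T}_{-1*}(X)=c_*(X)\otimes\Q$; since the downstream application (Ohmoto's Chern class formula) only needs Lemma \ref{l10} applied to each $X$ and $X^{(n)}$, your shorter proof suffices for that purpose.
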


The first equality of Lemma \ref{l10} follows by applying the following identity of transformations to the distinguished element $[id_X] \in K_0(var/X)$.

\begin{lemma}\label{l11} With the above notations, the following identification of transformations holds:
\begin{equation}\label{fin}
\lim_{y \to 1}  {\Psi_{(1-y)}}\Psi_r{{T_{(-y)}}_*}(-)=\widehat{T}_{-1*}(-): K_0(var/X) \to  H^{BM}_{ev}(X;\Q) \ .
\end{equation}
\end{lemma}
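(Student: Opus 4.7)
The plan is to compute $\Psi_{(1-y)}\Psi_r T_{(-y)*}(\alpha)$ degree by degree for an arbitrary $\alpha \in K_0(var/X)$, and to show that the resulting (a priori rational) expression in $y$ extends continuously to $y=1$ with value $\widehat{T}_{-1*}(\alpha)$.

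First, I would unwind the definitions. By the definition of the $r$-th homological Adams operation, on a class in $H^{BM}_{2k}(X;\Q[y])$ the operator $\Psi_r$ acts by the scalar $r^{-k}$ together with the substitution $y \mapsto y^r$; by the definition of the normalization functor in (\ref{norm}), $\Psi_{(1-y)}$ acts on the degree-$2k$ part by multiplication by $(1-y)^{-k}$. Combining these, for any $\alpha \in K_0(var/X)$,
\begin{equation}\label{stepA}
\bigl[\Psi_{(1-y)}\Psi_r T_{(-y)*}(\alpha)\bigr]_{2k} \;=\; (1-y)^{-k}\,r^{-k}\,\bigl[T_{(-y^r)*}(\alpha)\bigr]_{2k}.
\end{equation}

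Next, I would exploit the factorization $1-y^r=(1-y)(1+y+\cdots+y^{r-1})$ to rewrite the prefactor, obtaining
\begin{equation}\label{stepB}
(1-y)^{-k}\,r^{-k} \;=\; \left(\frac{1+y+\cdots+y^{r-1}}{r}\right)^{k}\,(1-y^r)^{-k}.
\end{equation}
Substituting $y \mapsto y^r$ in the defining relation $\widehat{T}_{(-y)*}(\alpha) = \Psi_{(1-y)} T_{(-y)*}(\alpha)$ yields $[\widehat{T}_{(-y^r)*}(\alpha)]_{2k} = (1-y^r)^{-k}\,[T_{(-y^r)*}(\alpha)]_{2k}$. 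Plugging this together with \eqref{stepB} into \eqref{stepA} gives
\begin{equation}\label{stepC}
\bigl[\Psi_{(1-y)}\Psi_r T_{(-y)*}(\alpha)\bigr]_{2k} \;=\; \left(\frac{1+y+\cdots+y^{r-1}}{r}\right)^{k}\,\bigl[\widehat{T}_{(-y^r)*}(\alpha)\bigr]_{2k}.
\end{equation}

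Finally, I would take the limit $y\to 1$. The crucial input (from \cite{BSY}) is that the normalized Hirzebruch class transformation $\widehat{T}_{y*}$ takes values in $H^{BM}_{ev}(X)\otimes \Q[y]$, so after the substitution $y \mapsto -y^r$, the class $[\widehat{T}_{(-y^r)*}(\alpha)]_{2k}$ is a genuine polynomial in $y$ which may be evaluated at $y=1$, yielding $[\widehat{T}_{-1*}(\alpha)]_{2k}$. Meanwhile, $(1+y+\cdots+y^{r-1})/r \to 1$ as $y\to 1$, so the scalar factor in \eqref{stepC} tends to $1$. Thus the limit of the right-hand side of \eqref{stepC} exists and equals $[\widehat{T}_{-1*}(\alpha)]_{2k}$ in every degree $2k$, yielding the claimed identity of transformations.

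The main obstacle is the careful bookkeeping of how the two parameters $y$ interact: the $y$ in the normalization prefactor $(1-y)^{-k}$ refers to the variable \emph{after} the Adams substitution $y\mapsto y^r$, so the apparent pole at $y=1$ is of order $k$ rather than being absent. The content of the computation is to show that this pole is precisely cancelled by the zero of $[T_{(-y^r)*}(\alpha)]_{2k}$ coming from the polynomiality of $\widehat{T}_{(-y^r)*}(\alpha)$, with residue $[\widehat{T}_{-1*}(\alpha)]_{2k}$, which is exactly what the elementary identity \eqref{stepB} encodes.
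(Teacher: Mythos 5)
Your proof is correct, and it takes a genuinely different route from the paper. The paper reduces to generators $[M\to X]$ with $M$ smooth and proper over $X$ (by functoriality of both sides under proper push-forward), then computes $\Psi_{(1-y)}\Psi_r T_{(-y)*}(M)$ explicitly via the Chern-root formula for the cohomological Hirzebruch class and evaluates the limit $y\to 1$ of the resulting power series in each Chern root by l'H\^opital's rule, arriving at $\prod(1+\alpha_i)\cap[X]=c_*(X)$. You instead argue directly for an arbitrary class $\alpha\in K_0(var/X)$, degree by degree: the key move is the algebraic factorization $1-y^r=(1-y)(1+y+\cdots+y^{r-1})$, which rewrites the normalization prefactor so that the apparent pole at $y=1$ is absorbed into the \emph{normalized} class $\widehat T_{(-y^r)*}(\alpha)$, and then the only substantive input is the Brasselet--Sch\"urmann--Yokura polynomiality result $\widehat T_{y*}(\alpha)\in H^{BM}_{ev}(X)\otimes\Q[y]$. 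Your version is cleaner and more structural: it avoids the reduction to smooth generators and the explicit power-series/l'H\^opital computation, and it isolates exactly which nontrivial theorem is really being used (BSY polynomiality, which the paper also quotes elsewhere). The paper's computational approach has the modest advantage that, in the smooth case, it re-derives the existence of the limit from scratch and produces the concrete identity $\lim_{y\to 1}\prod_i\frac{\alpha_i(1-y^r e^{-r\alpha_i(1-y)})}{1-e^{-r\alpha_i(1-y)}}=\prod_i(1+\alpha_i)$, which may be of independent interest.
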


\begin{proof} 
Since both sides of (\ref{fin}) are defined by functorial group homomorphisms, this identity   can be checked on generators. So, by functoriality for proper push-downs, it suffices to check it in the case when $X$ is smooth. In this case, we have
\begin{equation}\label{234}{{T_{(-y)}}_*}(X)=T^*_{(-y)}(TX) \cap [X]= \Big( \prod_{i=1}^{\dim(X)} Q_{(-y)}(\alpha_i) \Big) \cap [X],\end{equation} with $\alpha_i \in H^2(X;\Q)$ the Chern roots of the tangent bundle $TX$ and $Q_y(\alpha)=\frac{\alpha (1+ye^{-\alpha})}{1-e^{-\alpha}}$ the power series defining the un-normalized cohomological Hirzebruch class $T_y^*(-)$. By applying the $r$-th homological Adams operation to (\ref{234}), we get
\begin{equation}\label{345}\Psi_r {{T_{(-y)}}_*}(X)=\Big( \prod_{i=1}^{\dim(X)} Q_{(-y^r)}(r\alpha_i) \Big) \cap (r^{-\dim(X)}[X])=\Big( \prod_{i=1}^{\dim(X)} \frac{Q_{(-y^r)}(r\alpha_i)}{r} \Big) \cap [X].\end{equation}
Similarly, 
\begin{equation}\label{456}
\begin{split} {\Psi_{(1-y)}}\Psi_r {{T_{(-y)}}_*}(X)&=\Big( \prod_{i=1}^{\dim(X)} \frac{Q_{(-y^r)}(r\alpha_i(1-y))}{r} \Big) \cap \Big((1-y)^{-\dim(X)}[X]\Big)\\
&=\Big( \prod_{i=1}^{\dim(X)} \frac{Q_{(-y^r)}(r\alpha_i(1-y))}{r(1-y)} \Big) \cap [X]\\
&=\Big( \prod_{i=1}^{\dim(X)} \frac{ \alpha_i \left( 1-y^r e^{-r\alpha_i (1-y)} \right) } {1-e^{-r\alpha_i (1-y)}} \Big) \cap [X].
\end{split}
\end{equation}
By l'H\^opital's rule, we have:
\begin{align*}
\lim_{y \to 1} \frac{ \alpha \left( 1-y^r e^{-r\alpha (1-y)} \right) } {1-e^{-r\alpha (1-y)}} =
\lim_{y \to 1} \frac{ \alpha \left( -ry^{r-1}-r\alpha y^r \right)  e^{-r\alpha (1-y)} } { -r \alpha e^{-r\alpha (1-y)}} = \frac{-r-r\alpha}{-r}=1+\alpha.
\end{align*}
Thus, we get:
\begin{equation}\label{fin}
\lim_{y \to 1}  {\Psi_{(1-y)}}\Psi_r{{T_{(-y)}}_*}(X)=\Big( \prod_{i=1}^{\dim(X)} (1+\alpha_i) \Big) \cap [X]=c^*(TX) \cap [X]=c_*(X).\end{equation}

\end{proof}
Therefore, by specializing to $y=1$ in our formula (\ref{te100}), we recover as a corollary Ohmoto's Chern class formula \cite{Oh}:
\begin{cor}
For any quasi-projective complex algebraic variety $X$, the following formula holds in the homology  Pontrjagin ring $PH_*(X)$ with $\Q$-coefficients:
\begin{equation}
\sum_{n \geq 0} {c_*} (X^{(n)} ) \cdot t^n= \exp \left( \sum_{r \geq 1}  d^r_* {c_*}(X)  \cdot \frac{t^r}{r} \right) ,
\end{equation}
with $c_*(-)$ denoting the rationalized homology Chern class of MacPherson.
\end{cor}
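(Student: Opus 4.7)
The plan is to obtain Ohmoto's Chern class formula as the $y=1$ specialization of the normalized version of our motivic Hirzebruch class formula (\ref{te100}). First I would apply the normalization functor $\Psi_{(1-y)}$ of (\ref{norm}) to both sides of (\ref{te100}). On the left-hand side this produces the generating series $\sum_{n \geq 0} \widehat{T}_{(-y)*}(X^{(n)}) \cdot t^n$. Since each coefficient $\widehat{T}_{(-y)*}(X^{(n)})$ lies in $H^{BM}_{ev}(X^{(n)}) \otimes \Q[y]$, the specialization $y=1$ is well-defined and, by the general identity $\widehat{T}_{-1*}(-)=c_*(-)\otimes\Q$, equals $\sum_{n \geq 0} c_*(X^{(n)}) \otimes \Q \cdot t^n$.

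For the right-hand side, I would verify that the normalization functor $\Psi_{(1-y)}$ commutes with proper push-forward (it acts degreewise by scalar multiplication) and with external products, and hence with the Pontrjagin product $\odot$ and with the formal exponential in $PH_*(X)$. Since each map $d^r:X\to X^{(r)}$ is proper (as the composition of a closed diagonal embedding with the finite projection $\pi_r$), I may freely move $\Psi_{(1-y)}$ inside both $\exp$ and $d^r_*$, rewriting the right-hand side of (\ref{te100}) as
\[
\exp\left(\sum_{r \geq 1} d^r_*\bigl(\Psi_{(1-y)} \Psi_r T_{(-y)*}(X)\bigr) \cdot \frac{t^r}{r}\right).
\]
At this point I would invoke Lemma~\ref{l10}, which identifies $\lim_{y\to 1}\Psi_{(1-y)}\Psi_r T_{(-y)*}(X)=c_*(X)\otimes\Q$ independently of $r$. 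Because $d^r_*$ and the formal exponential act coefficientwise on polynomial expressions in $y$, the limit $y\to 1$ passes through them, and the right-hand side becomes the required $\exp\bigl(\sum_{r\geq 1}d^r_*c_*(X)\cdot t^r/r\bigr)$, matching Ohmoto's formula.

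The only delicate point in this plan is ensuring that the compound transformation $\Psi_{(1-y)}\Psi_r T_{(-y)*}(-)$ admits a well-defined limit as $y\to 1$; this is exactly the content of Lemma~\ref{l11}. Its proof reduces via proper push-down functoriality to the case of a smooth $X$, where one writes $T_{(-y)*}(X)$ explicitly in terms of Chern roots and the generating power series $Q_{(-y)}$, and then an l'H\^opital-type evaluation of the normalized Adams-twisted factor at $y=1$ identifies the result with $c^*(TX)\cap[X]=c_*(X)$. Once this computation is granted, the passage from (\ref{te100}) to Ohmoto's formula is purely formal, so the heart of the argument is the limit calculation encoded in Lemma~\ref{l11}.
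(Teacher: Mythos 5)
Your proposal follows the paper's argument essentially verbatim: apply $\Psi_{(1-y)}$ to both sides of (\ref{te100}), use compatibility of the normalization with proper push-forward, external products, and hence with the Pontrjagin product and exponential, and then pass to the limit $y \to 1$ using precisely Lemma~\ref{l10}/Lemma~\ref{l11} (reduction to smooth $X$ via proper push-down functoriality, Chern-root expansion, l'H\^opital). This is the same route the paper takes, so there is nothing to add.
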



\subsection{Chern classes of constructible sheaf complexes}

The above arguments can be extended to obtain as a corollary of our main Theorem \ref{t1}  the generating series formula (\ref{sh2}) for the rationalized MacPherson--Chern classes of symmetric products of a constructible sheaf complex $\FC$ underlying a complex of mixed Hodge modules $\MC$. For mixed Hodge module complexes, the normalized Hirzebruch class transformation $\widehat{T}_{y*}(-)$ is defined as before by the composition 
\begin{equation}\label{norm2}\widehat{T}_{y*} : K_0(\mh(X))  \overset{ {T_y}_*}{\to} H_{ev}^{BM}(X) \otimes \Q[y^{\pm 1}] \overset{{\Psi_{(1+y)}} }{\to} H_{ev}^{BM}(X) \otimes \Q[y^{\pm 1},(1+y)^{-1}].\end{equation} 
As shown in \cite[Proposition 5.21]{Sch-MSRI},  the transformation $\widehat{T}_{y*}$ of (\ref{norm2}) takes in fact values in $H_{ev}^{BM}(Z) \otimes \Q[y^{\pm 1}]$, so in particular one is allowed to specialize the parameter $y$ of the transformation to the value $y=-1$. 

In order to extend the above arguments to the setting of constructible sheaves considered here, we use the commutativity of the following diagram (see \cite{Sch-MSRI}[Proposition 5.21]):
\begin{equation}
\begin{CD} 
K_0(\mh(X)) @>rat>> K_0(D^b_c(X)) \\
@V\widehat{T}_{-1*}VV  @VV\chi_{\rm stalk}V \\
H^{BM}_{ev}(X;\Q) @<<c_* \otimes \Q< F(X)
\end{CD}
\end{equation}
Here, $rat: D^b\mh(X) \to D^b_c(X)$ is the forgetful functor associating to a complex of mixed Hodge modules the underlying constructible sheaf complex, and $\chi_{\rm stalk}$ is defined by taking the Euler characteristics of the stalk complexes. 
Then we have:

\begin{cor} For any quasi-projective complex algebraic variety $X$ and $\FC=rat (\MC)$ the underlying constructible sheaf complex of a complex of mixed Hodge modules $\MC \in D^b\mh(X)$, the following formula holds in the Pontrjagin ring $PH_*(X)$:
\begin{equation}\label{shp}
\sum_{n \geq 0} {c_*} (\FC^{(n)} ) \cdot t^n= \exp \left( \sum_{r \geq 1}  d^r_* {c_*}(\FC)  \cdot \frac{t^r}{r} \right) \ ,
\end{equation}
with $c_*(\FC):=c_*(\chi_{\rm stalk}(\FC))$.
\end{cor}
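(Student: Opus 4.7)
The plan is to deduce the corollary by applying the normalization functor $\Psi_{(1-y)}$ of (\ref{norm2}) to both sides of the main Theorem~\ref{t1} (applied to $\MC$) and then specializing at $y=1$, using the commutative diagram relating $\widehat{T}_{-1*}$ with $c_* \circ \chi_{\rm stalk} \circ rat$.

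Since $\Psi_{(1-y)}$ commutes with proper push-forwards and with external products, it commutes with the Pontrjagin product $\odot$ and with $\exp$. Applying it to both sides of (\ref{te1}) produces an identity in $PH_*(X) \otimes \Q[y^{\pm 1},(1-y)^{-1}]$ of the form
$$
\sum_{n \geq 0} \widehat{T}_{(-y)_*}(\MC^{(n)}) \cdot t^n = \exp\left( \sum_{r \geq 1} \Psi_{(1-y)} \Psi_r \bigl(d^r_* T_{(-y)_*}(\MC)\bigr) \cdot \frac{t^r}{r} \right).
$$
By Sch\"urmann \cite{Sch-MSRI}[Prop.~5.21], the class $\widehat{T}_{(-y)_*}(\MC^{(n)})$ already lies in $H^{BM}_{ev}(X^{(n)}) \otimes \Q[y^{\pm 1}]$, so the left-hand side admits a specialization at $y=1$. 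At $y=1$, the commutative diagram gives $\widehat{T}_{-1*}(\MC^{(n)}) = c_*(\chi_{\rm stalk}(rat(\MC^{(n)})))$. Since $rat$ commutes with external products and with the Schur projector $(-)^{\Sigma_n}$ (as explained in \cite{MSS}), one has $rat(\MC^{(n)}) = \FC^{(n)}$, so that $\widehat{T}_{-1*}(\MC^{(n)}) = c_*(\FC^{(n)})$.

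For the right-hand side, the same Sch\"urmann result implies that the degree $2k$ part of $T_{(-y)_*}(\MC)$ is divisible by $(1-y)^k$, so I may write $(T_{(-y)_*}(\MC))_{2k} = (1-y)^k \cdot b_k(y)$ with $b_k(y) \in H^{BM}_{2k}(X;\Q)[y^{\pm 1}]$ and $b_k(1)$ equal to the degree $2k$ component of $\widehat{T}_{-1*}(\MC) = c_*(\FC)$. Applying $\Psi_r$ (which substitutes $y \to y^r$ and divides by $r^k$ in degree $2k$) and then $\Psi_{(1-y)}$ (which divides by $(1-y)^k$ in degree $2k$), the degree $2k$ part of $\Psi_{(1-y)}\Psi_r T_{(-y)_*}(\MC)$ becomes
$$
\left( \tfrac{1-y^r}{r(1-y)} \right)^k \cdot b_k(y^r),
$$
which is regular at $y=1$ and specializes to $b_k(1)$, since $(1-y^r)/(1-y) = 1+y+\cdots+y^{r-1} \to r$ as $y \to 1$. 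Hence $\lim_{y \to 1} \Psi_{(1-y)}\Psi_r T_{(-y)_*}(\MC) = c_*(\FC)$, and pushing forward via $d^r$ (which commutes with both operators by functoriality for proper maps) gives $\lim_{y \to 1} \Psi_{(1-y)}\Psi_r d^r_* T_{(-y)_*}(\MC) = d^r_* c_*(\FC)$. Specializing both sides of the normalized identity at $y=1$ then yields (\ref{shp}).

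The main obstacle is the regularity of the right-hand side at $y=1$: the factor $(1-y)^{-k}$ introduced in degree $2k$ by $\Psi_{(1-y)}$ would otherwise produce genuine poles, and the cancellation hinges precisely on Sch\"urmann's polynomial normalization property $(1+y)^k \mid (T_{y*}(\MC))_{2k}$. Once this is granted, the remaining steps are routine consequences of functoriality of Adams operations and normalizations, together with the commutative diagram identifying $\widehat{T}_{-1*}$ with $c_* \circ \chi_{\rm stalk} \circ rat$.
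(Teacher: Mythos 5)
Your proposal is correct, and the overall structure mirrors the paper's: apply $\Psi_{(1-y)}$ to Theorem~\ref{t1}, use that $\Psi_{(1-y)}$ commutes with proper push-forwards and cross-products (hence with $\odot$ and $\exp$), identify the $y=1$ specialization of the left-hand side with $c_*(\FC^{(n)})$ via the commutative diagram from \cite{Sch-MSRI}[Prop.~5.21] and the compatibility of $rat$ with symmetric products, and establish the key limit (\ref{fin2}).

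Where you genuinely differ is in how the limit $\lim_{y\to 1}\Psi_{(1-y)}\Psi_r T_{(-y)*}(\MC)=\widehat{T}_{-1*}(\MC)$ is established. The paper derives (\ref{fin2}) by invoking its Lemma on $K_0(var/X)$ (proved by reducing to $X$ smooth and computing the power-series limit with l'H\^opital) and then asserting that this ``combines with the proof of \cite{Sch-MSRI}[Prop.~5.21]'' to extend to all of $K_0(\mh(X))$ — a step the paper leaves somewhat implicit, since $\chi_{\rm Hdg}$ need not be surjective. Your argument is more direct and self-contained: you extract the divisibility $(1-y)^k \mid T_{(-y)*}(\MC)_{2k}$ from the polynomiality statement of Prop.~5.21, write $T_{(-y)*}(\MC)_{2k}=(1-y)^k b_k(y)$ with $b_k(y)=\widehat{T}_{(-y)*}(\MC)_{2k}$, and then reduce the entire limit to the elementary identity $(1-y^r)/(1-y)=1+y+\cdots+y^{r-1}\to r$. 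This removes both the reduction to the smooth case and the analytic limit computation, and it applies directly on $K_0(\mh(X))$, which is precisely where the statement is needed. It is a cleaner and arguably more rigorous route to (\ref{fin2}) than the one the paper sketches.
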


The proof is exactly the same as above, and is based on the fact that the functor $rat$ commutes with symmetric products (see \cite{MSS}), together with the identification of transformations:
\begin{equation}\label{fin2}
\lim_{y \to 1}  {\Psi_{(1-y)}}\Psi_r{{T_{(-y)}}_*}(-)=\widehat{T}_{-1*}(-): K_0(\mh(X)) \to  H^{BM}_{ev}(X;\Q) 
\end{equation}
which follows from combining (\ref{fin}) with the proof of \cite{Sch-MSRI}[Proposition 5.21].

\bigskip


\bibliographystyle{amsalpha}

\end{document}